\DeclareMathOperator{\gr}{Gr}
\DeclareMathOperator{\diam}{diam}
\newtheorem{ut}{Theorem}
\newtheorem{up}[ut]{Proposition}
\newtheorem{ul}[ut]{Lemma}
\newtheorem{uc}[ut]{Corollary}
\newtheorem{ue}{Example}
\newtheorem{uq}{Question}
\theoremstyle{remark}
\theoremstyle{definition}
\newtheorem{ur}{Remark}
\author[D.S. Lipham]{David S. Lipham}
\address{Department of Mathematics and Data Science, College of Coastal Georgia, Brunswick GA 31520, United States of America}
\email{dlipham@ccga.edu}
\subjclass[2020]{54F15, 54F45, 54F50} 
\keywords{dendroid, smooth, endpoint, totally disconnected, zero-dimensional, Suslinian, plane, accessible}
\title[Endpoints of smooth plane dendroids]{Endpoints of smooth plane dendroids\\ 
\textnormal{\small \textit{In memory of professors Gary Gruenhage and Piotr Minc}}}
\begin{document}


\begin{abstract}Let $X$ be a smooth  dendroid in the plane $\mathbb R^2$. We show that each endpoint of $X$ is arcwise accessible from $\mathbb R^2\setminus X$, and that the space of  endpoints $E(X)$ has the  property of a circle. In the event that  $E(X)$ is connected,  we call $X$ a  \textit{Bellamy dendroid}. We prove that if $E(X)$ is 1-dimensional, then $X$ contains a Bellamy dendroid or a Cantor set of arcs.   In particular, if $E(X)$ totally disconnected and $1$-dimensional, then $X$  is non-Suslinian. An example is constructed to show that this is false outside the plane. \end{abstract}

\maketitle

\

\section{Introduction}

Dendroids  form an important class of uniquely arcwise connected  continua.   
They can be defined  as hereditarily arcwise connected continua without simple closed curves.  Every contractible $1$-dimensional continuum is a dendroid \cite{cont}.  Dendroids are tree-like \cite{tree} and have the fixed point property \cite{mank,bor}.   For a nice survey with open problems, see \cite{vn}.

The focus of this paper is on smooth plane dendroids. These are dendroids which embed into the plane and  admit radially convex metrics \cite{on}.  We will show that each endpoint of a smooth plane dendroid $X$ is accessible  from $\mathbb R^2\setminus X$, with at most  one exception (\S3). From here we find that the space of endpoints $E(X)$ is circle-like, in that every two of its points are separated by two other points (\S4). Bellamy constructed in \cite{bel}  a smooth plane dendroid with connected endpoint set which he observed to have this property.

Next we examine the topological dimension of  endpoints. It is well known that the endpoints of a smooth plane dendroid can be $1$-dimensional even when they are totally disconnected, as seen in the Lelek fan \cite{lel}. Another property of the Lelek fan is that it contains an uncountable collection of pairwise-disjoint arcs, i.e.\ it is non-Suslinian. This observation leads to the following theorem: Each  smooth plane dendroid with  hereditarily disconnected and 1-dimensional endpoint set  must be non-Suslinian (\S5). Planarity is critical to this result: We construct in \S5.3 a (not planable) Suslinian smooth dendroid $D$ such that $E(D)$ is homeomorphic to the endpoints of the Lelek fan. 

Finally, in \S6 we examine conditions under which a smooth plane dendroid $X$ must contain a \textit{Bellamy dendroid} (i.e.\ one whose endpoint set is connected). Our main result is that if $E(X)$ is 1-dimensional, then $X$ must contain a Bellamy dendroid or a Cantor set of arcs. We do not know if every Bellamy dendroid contains a Cantor set of arcs; see Question 1 in \S7. 

\subsection*{Acknowledgements}The example presented in \S5.3 is attributed to Ed Tymchatyn and Piotr Minc, although it was not published until now. I am grateful to Ed Tymchatyn for sharing with me the ideas for its construction. 

I also wish to thank Logan Hoehn for suggesting the use of colocal connectedness to prove accessibility of endpoints.

\section{Fundamental notions}

A \textbf{continuum} is a compact connected metric space. A continuum  is \textbf{hereditarily unicoherent} if every two of its subcontinua have connected intersection.  

A \textbf{dendroid} is a hereditarily unicoherent, arcwise connected continuum. It is easy to see that dendroids are uniquely arcwise connected. Furthermore,  in the plane a continuum $X\subset \mathbb R^2$  is a dendroid if and only if  $X$ is uniquely arcwise connected and non-separating \cite[Theorem 1.5]{mank}.
Given  a dendroid $X$ and points $x,y\in X$ then we let $\alpha(x,y)$ denote the unique arc in $X$ with endpoints $x$ and $y$.

  A dendroid $X$ is \textbf{smooth} if there exists $p\in X$ such that if $x_n\to x$ in $X$, then $\alpha(x_n,p)\to\alpha(x,p)$ in the Hausdorff distance. The point $p$ is called an \textbf{initial point} of $X$; alternatively we say that $X$ is \textbf{smooth at $p$}. 
  
If $X$ is a dendroid smooth at $p$, then there exists a compatible metric $d$ on $X$ such that  $d(y,p)<d(x,p)$ whenever $y\in \alpha(x,p)$ and $y\neq x$    \cite[Theorem 10]{on}. 
 This type of  metric is called \textbf{radially convex} with respect to $p$.

A point $e\in X$ is an \textbf{endpoint}  if $e$ is an endpoint of every arc in $X$ that contains it.  We let $E(X)$ denote the set of all endpoints of $X$. By \cite[Lemma 3]{nik2}, each arc in $X$ is contained in a maximal arc in $X$, each of whose endpoints belong to $E(X)$. The endpoint set  of a smooth dendroid is always $G_{\delta}$ \cite{borel}. 
  
The \textbf{order} of a point $x\in X$ is defined to be the number of arc components of $X\setminus \{x\}$.  A point order at least $3$ is called a \textbf{ramification point}. The set of all ramification points is denoted $R(X)$.
 
A \textbf{fan} is a dendroid $X$ with only one ramification point $x$ (thus  each component of $X\setminus \{x\}$ is homeomorphic to the interval $(0,1]$).  The \textbf{Cantor fan} is the quotient of $C\times [0,1]$ that takes  $C\times \{0\}$ to a  point. The \textbf{Lelek fan} is a smooth fan with  dense set of endpoints \cite{lel}. 

A continuum is \textbf{Suslinian} if  it contains no uncountable collection of non-degenerate, pairwise-disjoint subcontinua \cite{lel1}. The two propositions below help to understand the Suslinian property in dendroids.

\begin{up}[{cf.\ \cite[Theorem 9.9]{nik4}}]Let $X$ be a  dendroid. The following are equivalent.
\begin{itemize}
\item[(1)] $X$ is Suslinian;
\item[(2)] $X\setminus E(X)$ is a countable union of arcs;
\item[(3)]$R(X)$ is countable and each $r\in R(X)$ has countable order.
\end{itemize} \end{up}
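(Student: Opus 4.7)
My plan is to establish the equivalences as a cycle $(2)\Rightarrow(1)\Rightarrow(3)\Rightarrow(2)$.

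For $(2)\Rightarrow(1)$, I would argue by Baire category and pigeonhole. Suppose $X\setminus E(X)=\bigcup_{n\in\mathbb N} A_n$ with each $A_n$ an arc, yet $\{Y_\alpha\}$ is an uncountable family of pairwise disjoint non-degenerate subcontinua. Each $Y_\alpha$ contains an arc (dendroids are hereditarily arcwise connected), and trimming its endpoints yields a non-degenerate sub-arc $\beta_\alpha\subset X\setminus E(X)$, since interior points of any arc in $X$ are automatically non-endpoints of $X$. The $\beta_\alpha$ are pairwise disjoint. Writing $\beta_\alpha=\bigcup_n(A_n\cap\beta_\alpha)$ as a countable union of closed subsets and applying the Baire category theorem, some $A_{n(\alpha)}\cap\beta_\alpha$ contains a non-degenerate sub-arc; by pigeonhole a single $n$ works for uncountably many $\alpha$, producing uncountably many pairwise disjoint non-degenerate sub-arcs inside the arc $A_n\cong[0,1]$, which is absurd.

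For $(1)\Rightarrow(3)$, I would prove the contrapositive in two cases. If some $r\in R(X)$ has uncountable order, the uncountably many arc components $C$ of $X\setminus\{r\}$ each contain a non-degenerate sub-arc of $\alpha(r,x_C)$ bounded away from $r$; these lie in distinct arc components of $X\setminus\{r\}$, so are pairwise disjoint, directly violating the Suslinian property. If instead $R(X)$ is uncountable, the structural observation is that any two distinct $r_1,r_2\in R(X)$ admit disjoint ``side branches'': the arc $\alpha(r_1,r_2)$ lies in exactly one arc component of each $X\setminus\{r_i\}$, and hereditary unicoherence forces any other arc component $C_1$ of $X\setminus\{r_1\}$ to be disjoint from any other arc component $C_2$ of $X\setminus\{r_2\}$ (a point $z\in C_1\cap C_2$ would force both $r_1\in\alpha(z,r_2)$ and $r_2\in\alpha(z,r_1)$, whence $r_1=r_2$). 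Extracting from this local compatibility an uncountable pairwise disjoint family of side-branch subcontinua amounts to following the combinatorial selection of Nikiel \cite{nik4}.

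For $(3)\Rightarrow(2)$, enumerate $R(X)=\{r_1,r_2,\ldots\}$. For each pair $(i,j)$ the arc $\alpha(r_i,r_j)$ lies entirely in $X\setminus E(X)$, since its endpoints are in $R(X)$ and interior points of an arc are never endpoints of $X$. In addition, for each $r_i$ and each of its countably many arc components $C$ of $X\setminus\{r_i\}$ (countable by the order hypothesis), include a sequence of arcs $\alpha(r_i,c_n)$ with $c_n\in C\setminus E(X)$ tending ``outward'' in $C$. A standard tree argument using hereditary unicoherence shows that this countable family of arcs covers all of $X\setminus E(X)$.

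The main obstacle is the uncountable-$R(X)$ case of $(1)\Rightarrow(3)$. The pairwise disjointness lemma for side branches at two ramification points makes local choices compatible, but globalizing to an uncountable family of ramification points whose assigned subcontinua remain pairwise disjoint requires a delicate Cantor--Bendixson / condensation-point extraction, which is the technical heart of the proof.
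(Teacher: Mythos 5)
Your $(2)\Rightarrow(1)$ argument is correct and is essentially the paper's Baire-category proof of that direction, and the cyclic structure $(2)\Rightarrow(1)\Rightarrow(3)\Rightarrow(2)$ is logically fine. However, the proposal has two genuine gaps. First, in $(1)\Rightarrow(3)$ you leave the uncountable-$R(X)$ case unproved: the side-branch disjointness lemma you state is correct for a \emph{pair} $r_1\neq r_2$ (a component of $X\setminus\{r_1\}$ avoiding $r_2$ is disjoint from a component of $X\setminus\{r_2\}$ avoiding $r_1$), but it does not globalize by itself, because a component of $X\setminus\{r\}$ has no reason to avoid all the other (uncountably many) ramification points simultaneously, so one cannot simply assign one branch per point of $R(X)$. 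You acknowledge this and defer to ``the combinatorial selection of Nikiel,'' but that deferral \emph{is} the missing content. The paper closes exactly this gap by citing Nikiel's theorem (\cite[Theorem 1]{nik3}) that $R(X)$ is covered by countably many arcs, and then observing that along a single arc the off-arc branches at distinct ramification points \emph{are} pairwise disjoint (here your lemma does apply), so each arc carries only countably many ramification points.

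Second, your $(3)\Rightarrow(2)$ is only a sketch, and the asserted covering is where the difficulty lies. A single ``outward-tending'' sequence $c_n$ in each arc component $C$ of $X\setminus\{r_i\}$ follows only countably many branches of $C$, while under (3) the component $C$ can still branch toward uncountably many endpoints (e.g.\ Gehman-type pieces); points lying beyond every ramification point on a branch not followed by your sequence, points on terminal free segments, and the degenerate case $R(X)=\varnothing$ are not covered by the family you describe, and ``a standard tree argument'' is not a substitute for handling them. The paper avoids a direct $(3)\Rightarrow(2)$ altogether: it proves $(3)\Rightarrow(1)$ by showing that the nondegenerate arc components of $X\setminus(R(X)\cup E(X))$ are countable (each has an end at a ramification point, whose order is countable), so that $R(X)$ together with countable dense subsets of these components gives a countable set meeting every arc; it then gets (2) from (1) by extending $\alpha(x,y)$ to a maximal arc and choosing a point of that countable set beyond $y$. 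If you want to keep your cyclic scheme, you would essentially have to reproduce this construction of a countable set meeting every arc inside your $(3)\Rightarrow(2)$ step.
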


\begin{proof} 

 (1)$\Rightarrow$(2): If $X$ is Suslinian, then  there is a countable set $Q\subset X$ which intersects each arc of $X$ \cite[Corollary 2.3]{lel1}. Fix $x\in X\setminus E(X)$. If $y\in X\setminus E(X)$ then the maximal arc $\beta$ extending $\alpha(x,y)$ contains an arc $\alpha(y,e)$, where $e\in E(X)$. There exists $q\in Q\cap \alpha(y,e)\setminus \{e\}$. Then $y\in \alpha(x,q)\subset X\setminus E(X)$. This shows that $X\setminus E(X)$ is equal to the union of arcs $\alpha(x,q)$ with $q\in Q\setminus E(X)$. 
 
(2)$\Rightarrow$(1): Suppose that $X\setminus E(X)=\bigcup_{n=1}^\infty \alpha_n$ is a countable union of arcs. Let $Q_n$ be a countable dense subset of $\alpha_n$. Given any arc  $\beta\subset X\setminus E(X)$, by Baire's theorem, there exists $n$ such that $\alpha_n$ contains an arc of $\beta$. Then $Q_n \cap \beta\neq\varnothing$. This shows that the countable set $Q=\bigcup_{n=1}^\infty Q_n$ intersects each arc of $X$. Therefore $X$ is Suslinian.

  (1)$\Rightarrow$(3): Suppose that  $X$ is Suslinian.  Then clearly each ramification point must have countable order. Further, by \cite[Theorem 1]{nik3}  $R(X)$ is covered by countably many  arcs, and  along each  arc there can be only countably many ramification points. Therefore $R(X)$ is countable. 
  
 (3)$\Rightarrow$(1):  Let $\mathscr A$ be the set of all non-degenerate arc components of $X\setminus (R(X)\cup E(X))$. Each element of $\mathscr A$ is homeomorphic to the interval  $(0,1)$ and has at least one endpoint in $R(X)$, unless $X$ is an arc. Thus if $\mathscr A$ were uncountable, then uncountably many  components would end at the same ramification point $r$, which would contradict that the order of $r$ is countable.  So $\mathscr A$ is countable. Let $D$ consist of a countable dense subset from each element of $\mathscr A$. Then $Q=R(X)\cup D$ is a countable set that meets every arc in $X$. Therefore $X$ is Suslinian. \end{proof}
 
 By a \textbf{Cantor set of arcs} 
 we mean a continuous collection of pairwise-disjoint arcs whose decomposition space is a Cantor set.
 
 \begin{up}Let $X$ be a plane dendroid. The following are equivalent:
 \begin{itemize}
\item[(i)] $X$ is non-Suslinian;
\item[(ii)] $X$ contains a Cantor set of arcs.
 \end{itemize}\end{up}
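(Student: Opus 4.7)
The plan is to handle (ii)$\Rightarrow$(i) directly and to reduce (i)$\Rightarrow$(ii) to extracting a continuous Cantor-indexed subfamily from an uncountable pairwise-disjoint family of arcs.

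Direction (ii)$\Rightarrow$(i) is immediate, since a Cantor set of arcs supplies uncountably many pairwise-disjoint non-degenerate subcontinua.

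For (i)$\Rightarrow$(ii), suppose $X$ is non-Suslinian. By the previous proposition, either some $r\in R(X)$ has uncountable order (Case A) or $R(X)$ is uncountable (Case B). In Case A, for each of the uncountably many arc components $C$ of $X\setminus\{r\}$ I pick an endpoint $e_C\in\overline{C}\cap E(X)$ (the maximal-arc result of \cite{nik2} supplies one) and form $\beta_C=\alpha(r,e_C)$; these arcs are pairwise disjoint off of $r$. Choosing $\varepsilon>0$ so that uncountably many $\beta_C$ leave $B_\varepsilon(r)\subseteq\mathbb R^2$ and truncating each such $\beta_C$ at its first exit point $f_C\in\partial B_\varepsilon(r)$, I obtain an uncountable attachment set $\{f_C\}$ on a topological circle. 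In Case B, if we are not already in Case A, then by \cite{nik3} the set $R(X)$ is covered by countably many arcs, so some arc $A\subseteq X$ contains uncountably many ramification points of $X$; at each such $r\in A$ the order $\geq 3$ supplies an arc component of $X\setminus\{r\}$ disjoint from $A$, and letting $\gamma_r$ be an arc from $r$ to an endpoint in that component, unique arcwise connectedness (combined with the fact that the unique $X$-arc between any two $r,r'\in A$ lies in $A$) forces $\gamma_r\cap\gamma_{r'}=\varnothing$. Once again, I record uncountably many attachment points, this time on the arc $A$.

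The final step, common to both cases, is to extract a continuous Cantor-indexed subfamily from the uncountable pairwise-disjoint collection of arcs. Planarity induces a cyclic (Case A) or linear (Case B) order on the attachment locus; the perfect set theorem applied to this analytic subset of a circle or arc yields a Cantor set $K$ of attachment points. The arc-selector $k\mapsto\gamma_k$ is lower semicontinuous into the hyperspace of subcontinua of $X$, hence continuous on a dense $G_\delta$ subset $K'\subseteq K$ by the standard fact that a semicontinuous set-valued map into a compact metric space has a residual set of continuity points. Any Cantor subset of $K'$ then provides the required continuous Cantor family of pairwise-disjoint arcs.

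The main obstacle is precisely this last step: converting an uncountable pairwise-disjoint family of arcs into a \emph{continuous} Cantor family in the hyperspace. Planarity is essential here, entering through the cyclic/linear ordering of attachment points, without which such a continuous selection need not be available.
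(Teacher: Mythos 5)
There is a genuine gap, and it sits exactly where you flag ``the main obstacle.'' Your final step asserts that the arc-selector $k\mapsto\gamma_k$ is lower semicontinuous into the hyperspace and then invokes the residual-continuity-point theorem. But the arcs $\gamma_k$ (or $\beta_C$) are an arbitrary, choice-dependent selection --- one arc per component/ramification point --- and such a selection has no semicontinuity properties whatsoever: nearby attachment points can carry arcs of wildly different shapes and sizes, and nothing in the construction (including the cyclic or linear order of the attachment points, which your argument never actually exploits) forces $\gamma_{k_n}$ to accumulate on $\gamma_k$. Fort-type theorems give residual continuity only once upper or lower semicontinuity is established, which is precisely what is missing. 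A second, related defect: the set of attachment points $\{f_C\}$ is again an arbitrary uncountable set produced by choices, so it need not be analytic, and an uncountable subset of a circle need not contain a Cantor set (Bernstein sets); the perfect set theorem does not apply. (Minor further issues: in Case B you invoke \cite{nik3} to cover $R(X)$ by countably many arcs, but the paper uses that result under the Suslinian hypothesis, which you have negated; and the whole case split is unnecessary, since non-Suslinian already yields uncountably many pairwise-disjoint arcs because every non-degenerate subcontinuum of a dendroid contains an arc.)

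The missing content is substantial and is exactly what the paper outsources to the Cook--Lelek theorem \cite[Theorem 2.1]{wea}: a non-Suslinian curve contains a closed set whose decomposition into (non-degenerate) components is \emph{continuous} with Cantor set quotient. That theorem is where the continuity of the family comes from; it is not obtained by taking continuity points of an arbitrary selection. Planarity then enters not through any ordering of attachment points but through Moore's triod theorem, which guarantees that all but countably many members of that continuous family are arcs; discarding the countable exceptional set and passing to a Cantor subset of the (Borel) remainder finishes the proof. So your outline identifies the right difficulty but does not resolve it, and it also locates the role of planarity in the wrong place.
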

 
 \begin{proof}(ii)$\Rightarrow$(i) is trivial. For (i)$\Rightarrow$(ii), suppose that $X$ is non-Suslinian. By \cite[Theorem 2.1] {wea}, there is a closed $A\subset X$ such that the components of $A$ are non-degenerate,  the
decomposition of $A$ into components is continuous, and the space of components of $A$ is a Cantor set $C$. By Moore's triod theorem, there is a countable  $Q\subset C$ such that the components associated with $C\setminus Q$ are arcs.  $C\setminus Q$ is an uncountable Borel set, thus it contains a Cantor set $D$. The components of $A$ over $D$ form a Cantor set of arcs. \end{proof}

\section{Accessibility of endpoints}

 If $U$ is an open subset of $\mathbb R^2$ then a point $z\in \mathbb R^2\setminus U$ is \textbf{accessible  from} $U$ if there is an arc $\alpha\subset \mathbb R^2$ such that $\alpha\setminus U=\{z\}$.  When a continuum $X$ is understood from context, we will say that $x\in X$ is \textbf{accessible} if $x$ is accessible from $\mathbb R^2\setminus X$.

A continuum $X$ is \textbf{colocally connected} at $x\in X$ if for every open set $V$ containing $x$ there is an open $U\subset V$ such that $x\in U$ and $X\setminus U$ is connected.

A \textbf{simply connected domain} is a bounded open subset of the plane which is connected and simply connected (and is  homeomorphic to $\mathbb R^2$).

\begin{ul}Let $X$ be a dendroid in the plane. Let  $U$ be an open set in the plane such that $X\setminus U$ is  connected. If  $W$ is any connected component of $U$, then $X\setminus W$ is connected.\end{ul}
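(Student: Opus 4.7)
The plan is to proceed by contradiction: assume $X\setminus W$ admits a separation $X\setminus W = A\sqcup B$ into disjoint, nonempty, relatively closed pieces, and derive an impossibility using the unique arc in $X$ between a point of $A$ and a point of $B$.

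Before invoking the dendroid structure, I would isolate a purely planar observation: the topological boundary $\partial W$ satisfies $\partial W\subset \mathbb R^2\setminus U$. The reason is that $\mathbb R^2$ is locally connected, so every component of an open set is itself open; a point of $\overline W\cap U$ would lie in some component $W'$ of $U$ with $W'\cap W\neq\varnothing$, forcing $W'=W$ and contradicting that the point is in $\partial W$. In particular, $\partial W\cap X\subset X\setminus U$.

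Next, since $X\setminus U\subset X\setminus W$ is connected (by hypothesis), it lies entirely inside one of $A,B$; without loss of generality, $X\setminus U\subset A$, so $B\subset U\setminus W$. Assuming $X\not\subset W$ (the only nontrivial case; otherwise $X\setminus W$ is empty or all of $X$), choose $a\in X\setminus U\subset A$ and $b\in B$, and look at the unique arc $\alpha=\alpha(a,b)\subset X$ guaranteed by the dendroid structure. Parametrize $\alpha:[0,1]\to X$ with $\alpha(0)=a$, $\alpha(1)=b$. Because $A$ and $B$ are disjoint and separate $X\setminus W$, the connected set $\alpha$ cannot lie entirely in $X\setminus W$, so $\alpha$ meets $W$; set $t_1=\sup\{t\in[0,1]:\alpha(t)\in W\}$. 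Since $W$ is open in $\mathbb R^2$ and $\alpha$ is continuous, $\alpha(t_1)\notin W$, yet $\alpha(t_1)\in\overline W$, so $\alpha(t_1)\in \partial W\cap X\subset X\setminus U\subset A$.

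On the other hand, by definition of $t_1$ the subarc $\alpha\bigl([t_1,1]\bigr)$ is disjoint from $W$ and hence contained in $A\cup B$; it is connected and contains $b\in B$, so it lies entirely in $B$, forcing $\alpha(t_1)\in B$. This contradicts $\alpha(t_1)\in A$ and completes the proof. I do not expect a serious obstacle: the argument reduces the planar aspect to the single observation about $\partial W$, and everything else uses only unique arcwise connectedness of $X$. The one thing to be slightly careful about is the degenerate case $X\subset U$, which should be dispatched separately by noting that the connected set $X$ then sits in a single component of $U$.
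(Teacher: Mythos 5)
Your proof is correct, and it takes a genuinely different (though closely related) route from the paper's. The paper argues directly: given $x,y\in X\setminus W$, it runs an arc from each of $x$ and $y$ toward a point of $W$, truncates each at the first parameter where it leaves $U$, and joins the two exit points by an arc inside $X\setminus U$; the union is a path from $x$ to $y$ in $X\setminus W$, so the paper in effect establishes path-connectedness of $X\setminus W$. You instead assume a separation $X\setminus W=A\sqcup B$ and derive a contradiction from a single crossing arc. Both arguments ultimately rest on the same two facts --- that $\partial W\subset\mathbb R^2\setminus U$ because components of open subsets of a locally connected space are open, and that the connected set $X\setminus U$ must lie entirely on one side --- but your version isolates the planar input cleanly, needs only that $X\setminus U$ is \emph{connected} (the paper additionally uses that $X\setminus U$ is arcwise connected, i.e.\ that closed connected subsets of dendroids are dendroids), and works verbatim for any arcwise connected $X$ in any locally connected ambient space. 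One small inconsistency: the nondegeneracy hypothesis your main argument actually requires is $X\not\subset U$ (so that the point $a\in X\setminus U$ exists), not $X\not\subset W$ as written; your closing remark correctly identifies $X\subset U$ as the case to dispatch separately, so nothing is lost, but the two statements should be reconciled.
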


\begin{proof}Let $x,y\in X\setminus W$. Let $f_x:[0,1]\hookrightarrow X$ such that $f_x(0)=x$ and $f_x(1)\in W$. Let $a=\inf\{t\in [0,1]:f(t)\notin U\}$. Define $f_y$ and $b$ similarly for $y$. Let $\alpha$ be an arc in $X\setminus U$ from $f_x(a)$ to $f_y(b)$. Then $\beta=f_x[0,a]\cup \alpha\cup f_y[0,b]$ is an arc in $X\setminus W$ from  $x$ to $y$. This shows that $X\setminus W$ is connected. \end{proof}

\begin{ul}Let $X$ be a dendroid in the plane. Let $S\subset \mathbb R^2$ be a circle with complementary components $U$ bounded and $V$ unbounded, such that $X$ meets $V$. Suppose that $W$ is a connected open subset of $U$ such that   $X\setminus W$ is connected.  Then there is a simply connected domain  $W'\subset U$ such that $W\subset W'$ and $X\setminus W'$ is connected\end{ul}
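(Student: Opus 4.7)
The plan is to set $K:=X\setminus W$ and to take $W'$ to be the component of the open set $U\setminus K$ that contains $W$. Since $W$ is connected and $W\cap K=\varnothing$, $W$ lies in a unique such component, so $W'$ is well defined; by construction it is open, bounded, connected, and satisfies $W\subset W'\subset U$.

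The first (easy) verification is that $X\setminus W'=X\setminus W=K$, which is connected by hypothesis. The inclusion $X\setminus W'\supset X\setminus W$ is immediate from $W\subset W'$, and conversely any $x\in X\cap W'\subset X\cap(U\setminus K)$ satisfies $x\in X$ and $x\notin K$, so $x\in W$.

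The main step is to show that $W'$ is simply connected; for a bounded connected open subset of the plane this reduces to showing $\mathbb R^2\setminus W'$ is connected. The set $\mathbb R^2\setminus U=V\cup S$ is connected, and $K$ meets it, because $X\cap V\ne\varnothing$ by hypothesis while $V\cap W\subset V\cap U=\varnothing$ forces $X\cap V\subset K$. Hence $B:=K\cup(\mathbb R^2\setminus U)$ is a connected subset of $\mathbb R^2\setminus W'$. I will then write
\[
\mathbb R^2\setminus W'\;=\;B\;\cup\;\bigcup\bigl\{\overline C:C\text{ is a component of }U\setminus K,\ C\ne W'\bigr\},
\]
and observe that maximality of each such $C$ in $U\setminus K$ forces $\partial C\subset K\cup\partial U\subset B$, so each connected set $\overline C$ meets $B$. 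A union of connected sets sharing a common connected set $B$ is connected, yielding the claim.

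I do not anticipate a serious obstacle here. The essential design choice is to take the component inside $U\setminus K$ rather than in $\mathbb R^2\setminus K$: since $K$ meets $V$, the latter is unbounded and would violate $W'\subset U$. Likewise, one should resist the temptation to obtain $W'$ by filling in arbitrary bounded complementary components of $W$, since such filling could absorb points of $X$ and destroy connectedness of $X\setminus W'$.
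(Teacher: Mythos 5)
Your proof is correct, but it takes a genuinely different route from the paper. You define $W'$ as the component of $U\setminus K$ (with $K=X\setminus W$) that contains $W$, verify $X\cap W'=X\cap W$ directly from the definition, and prove simple connectivity by decomposing $\mathbb R^2\setminus W'$ into the connected set $B=K\cup(\mathbb R^2\setminus U)$ together with the closures of the other components of $U\setminus K$, each of which meets $B$ because its boundary lies in $\partial(U\setminus K)\subset K\cup S$. All of these steps check out ($K$ is closed since $X$ is compact and $W$ is open, so $U\setminus K$ is open and its components are open; each such component has nonempty boundary since it is a nonempty bounded open proper subset of $\mathbb R^2$). The paper instead takes $W'$ to be $W$ together with all bounded complementary components of $W$: then $\mathbb R^2\setminus W'$ is simply the unbounded component of $\mathbb R^2\setminus W$, so simple connectivity is immediate, and the key observation is that $X\setminus W$, being connected and meeting $V$, lies entirely in that unbounded component, so the filled-in holes absorb no points of $X$. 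This last point is exactly where your closing remark is too pessimistic: you warn that filling in holes ``could absorb points of $X$,'' but the hypothesis that $X\setminus W$ is connected and meets $V$ rules that out, which is precisely the lever the paper pulls. Your construction buys a more self-contained argument (no appeal to the fact that complementary components of an open connected set meet its boundary), at the cost of a slightly longer connectivity computation; the paper's is shorter but leans on that standard fact and on the connectedness hypothesis in a less visible way.
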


\begin{proof}Let $W'$ be the union of $W$ with all of its bounded complementary components; clearly $W\subset W'\subset U$. Each complementary component of $W$ meets $\partial W$ (e.g.\ by  \cite[Theorem 5.4]{nad}), so  $W'$ is connected. Also $W'$ is open and has connected complement (because $\mathbb R^2\setminus W'$ is just the unbounded component of $\mathbb R^2\setminus W$).  Therefore $W'$ is simply connected. Note that $X\setminus W$ lies wholly in the unbounded component of $\mathbb R^2\setminus W$ because  $X\setminus W$  is a connected set that meets $V$. Therefore $X\setminus W'=X\setminus W$ is connected.\end{proof}

\begin{ul} Let $X$ be a dendroid in the plane and  $W\subset \mathbb R^2$ a simply connected domain. If    $X\setminus W$ is connected, then $W\setminus X$ is connected. \end{ul}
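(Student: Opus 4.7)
The plan is to pass to the Riemann sphere $S^2 = \mathbb R^2 \cup \{\infty\}$ and invoke a Phragmen--Brouwer--type principle: if $A, B \subset S^2$ are compact with $A \cap B$ connected (possibly empty), and if neither $A$ nor $B$ separates two given points of $S^2 \setminus (A \cup B)$, then $A \cup B$ does not separate them either.

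First I would set $A = X$ and $B = S^2 \setminus W$. Both are compact: $X$ is a continuum, and $W$ is open and bounded, so $S^2 \setminus W$ is closed in the sphere. I then verify the three hypotheses. The intersection $A \cap B = X \cap (S^2 \setminus W) = X \setminus W$ is connected by assumption. The complement $S^2 \setminus A$ is connected, since every planar dendroid is non-separating (as cited in \S2), so $\mathbb R^2 \setminus X$ is connected, and adjoining $\infty$ to the unbounded complementary component preserves connectedness. The complement $S^2 \setminus B = W$ is connected by assumption.

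Since $S^2 \setminus A$ and $S^2 \setminus B$ are themselves connected, any two points of $S^2 \setminus (A \cup B)$ lie in the unique component of each; by the Phragmen--Brouwer principle they lie in the same component of $S^2 \setminus (A \cup B) = W \setminus X$, proving that $W \setminus X$ is connected.

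The main obstacle will be justifying the separation principle in its non-disjoint form (with connected intersection), which is classical but not elementary; a self-contained alternative is a Mayer--Vietoris computation in reduced $H_0$ for the open cover $\{W,\, S^2 \setminus X\}$ of $S^2 \setminus (X \setminus W)$, using that each of those three open sets is simply connected because the continua $X$ and $X \setminus W$ are planar and tree-like.
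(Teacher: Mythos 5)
Your proof is correct, but it takes a genuinely different route from the paper's. The paper argues by contradiction via a quotient: collapsing $\mathbb R^2\setminus W$ to a point turns the plane into a $2$-sphere (the one-point compactification of $W\cong\mathbb R^2$), and collapsing the subcontinuum $X\setminus W$ turns $X$ into a dendroid in that sphere; if $W\setminus X$ were disconnected, this image dendroid would separate the sphere, which is impossible. You instead work directly in $S^2$ with $A=X$ and $B=S^2\setminus W$ and invoke Janiszewski's theorem (the Phragm\'en--Brouwer-type principle for closed sets with connected intersection, valid in $S^2$ by unicoherence), together with the fact already quoted in \S 2 that plane dendroids are non-separating; your hypothesis checks are all sound, and $S^2\setminus(A\cup B)=W\setminus X$ comes out connected as claimed. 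Your Mayer--Vietoris fallback also works, with one small repair: what is needed is only $H_1\bigl(S^2\setminus(X\setminus W)\bigr)=0$, which holds because $X\setminus W$ is a subcontinuum of a dendroid, hence itself a dendroid and non-separating (in fact every complementary component of any continuum in $S^2$ is simply connected), whereas calling that possibly disconnected open set ``simply connected'' without this remark is slightly imprecise. The trade-off: the paper's quotient argument is shorter but silently requires verifying that $\mathbb R^2/(\mathbb R^2\setminus W)\cong S^2$ and that the monotone quotient $X/(X\setminus W)$ is again a dendroid, while your argument avoids quotients entirely at the cost of importing a classical separation theorem (or a homological computation); both ultimately rest on non-separation properties of tree-like continua in the sphere and are of comparable depth.
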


\begin{proof}Suppose that $X\setminus W$ is connected but $W\setminus X$ is not.  Then the quotient $X/(X\setminus W)$ is  a dendroid that separates the $2$-sphere $\mathbb R^2/(\mathbb R^2\setminus W)$, a contradiction. \end{proof}

\begin{ut}\label{colo}Let $X$ be a plane dendroid. If $X$ is colocally connected at $x$, then $x$ is accessible.\end{ut}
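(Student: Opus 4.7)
The plan is to construct a continuous map $\gamma\colon[0,1]\to\mathbb{R}^2$ with $\gamma(0)=x$ and $\gamma((0,1])\subset\mathbb{R}^2\setminus X$, and then extract an arc from its image. Once $\gamma$ is in hand, the image $Y=\gamma([0,1])$ is a Peano continuum (continuous image of $[0,1]$ in $\mathbb{R}^2$) and $Y\cap X=\{x\}$, so by the Hahn--Mazurkiewicz theorem there is an arc $\beta\subset Y$ from $\gamma(1)$ to $x$, and $\beta\cap X=\{x\}$ witnesses the accessibility of $x$.

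The backbone of $\gamma$ will be a decreasing sequence of simply connected domains $W_1\supset W_2\supset\cdots$ containing $x$ with $\diam(W_n)\to 0$ and each $W_n\setminus X$ connected. To pass from $W_n$ to $W_{n+1}\subset W_n$: apply colocal connectedness at $x$ inside $W_n$ to produce an open set $U\ni x$ with $\overline{U}\subset W_n$ and $X\setminus U$ connected; let $W$ be the component of $U$ containing $x$ and use Lemma 1 to conclude $X\setminus W$ is connected; pick a small circle $S$ around $x$ whose bounded complementary disk lies in $W_n$, contains $W$, has diameter less than $1/(n+1)$, and is small enough that $X$ meets the unbounded complementary component of $S$ (which is possible because we may assume $X$ is non-degenerate). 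Lemma 2 now produces a simply connected domain $W_{n+1}$ inside that disk with $W\subset W_{n+1}$ and $X\setminus W_{n+1}$ connected, and Lemma 3 upgrades this to connectedness of $W_{n+1}\setminus X$.

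Because a planar dendroid is at most $1$-dimensional and so has empty interior, each $W_n\setminus X$ is a non-empty connected open subset of $\mathbb{R}^2$, hence arcwise connected. Choose $y_n\in W_{n+1}\setminus X$ (so $y_n,y_{n+1}\in W_n\setminus X$) and an arc $\alpha_n\subset W_n\setminus X$ from $y_n$ to $y_{n+1}$; parametrize it over $[1/(n+1),1/n]$ and set $\gamma(0)=x$. Continuity at $0$ is forced by $\alpha_n\subset W_n$, $x\in W_n$, and $\diam(W_n)\to 0$. The main obstacle is coordinating the inductive step so that shrinking the diameter does not destroy the connectedness of $X\setminus W_n$---this is exactly what Lemmas 1--3 are engineered to handle---while the final extraction of an arc inside the Peano continuum $Y$ is standard.
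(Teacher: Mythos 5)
Your argument is correct and is essentially the paper's own proof: both use colocal connectedness together with Lemmas 3--5 to manufacture arbitrarily small simply connected domains $W$ about $x$ with $X\setminus W$ (hence $W\setminus X$) connected, chain arcs of $\mathbb R^2\setminus X$ through these domains, and extract the accessing arc from the resulting Peano continuum. The only (harmless) difference is that you arrange the domains to be nested while the paper lets them overlap and joins consecutive arcs at points of $W_n'\cap W_{n+1}'\setminus X$; for your nesting step to go through, apply colocal connectedness not just ``inside $W_n$'' but inside $W_n$ intersected with a ball about $x$ of radius less than $\tfrac{1}{2(n+1)}$, so that the component $W$ actually fits inside the required small circle.
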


\begin{proof}Let $\delta=\diam(X)$ and for each $n=1,2,3,\ldots$ let $S_n$ be the circle of radius $\delta/2n$ centered at $x$. Note that $X$ meets the unbounded component of each $\mathbb R^2\setminus S_n$.  Let $U_n$ be an open set in the bounded component of $\mathbb R^2\setminus S_n$ such that  $x\in U_n$ and $X\setminus U_n$ is connected. Let $W_n$ be the connected component of $x$ in $U_n$. By Lemma 3, $X\setminus W_n$ is connected. By Lemma 4  there is a simply connected domain $W'_n\subset U_n$ such that  $x\in W_n'$ and $X\setminus W'_n$ is connected. By Lemma 5, $W_n'\setminus X$ is path-connected. 

Now let $y_0\in \mathbb R^2\setminus X$. Let $\alpha_0\subset \mathbb R^2\setminus X$ be an arc from $y_0$ to  $y_1\in W'_1\setminus X$. Let $y_2\in W'_2\cap W'_1\setminus X.$  Let $\alpha_1\subset W'_1\setminus X$ be an arc from $y_1$ to $y_2$. Continue this process, letting $\alpha_n$ be an arc in $W'_n\setminus X$ from $y_{n}$ to a new point $y_{n+1}\in W'_n\cap W'_{n+1}\setminus X.$ The arcs $\alpha_n$ form a null sequence that converges to $x$. Hence the closure of their union is a locally connected continuum. It contains an arc $\alpha$ from $y_0$ to $x$, such that $\alpha\cap  X=\{x\}$. \end{proof}

\begin{uc}\label{acc}Let $X$ be a smooth plane dendroid with initial point $p$. Then every endpoint $e\in E(X)\setminus \{p\}$ is accessible.\end{uc}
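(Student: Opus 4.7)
The plan is to show that $X$ is colocally connected at every endpoint $e\in E(X)\setminus\{p\}$; once this is established, Theorem~\ref{colo} yields accessibility immediately. So I need only exhibit, for each open neighborhood $V$ of $e$ in $X$, an open set $U\subset V$ with $e\in U$ and $X\setminus U$ connected. It suffices to handle arbitrarily small $V$, so I may assume $p\notin V$.

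The construction specifies the complement of $U$ directly: set
\[ Y = \bigcup_{x\in X\setminus V}\alpha(x,p) \]
and let $U = X\setminus Y$. Four properties need to be verified: $Y$ is closed (so $U$ is open), $Y$ is connected (so $X\setminus U$ is connected), $X\setminus V\subset Y$ (so $U\subset V$), and $e\notin Y$ (so $e\in U$).

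Three of these are immediate. Connectedness of $Y$ follows because every $\alpha(x,p)$ is connected and contains the common point $p\in X\setminus V$. The inclusion $X\setminus V\subset Y$ is trivial (take $x$ itself in the union). For $e\notin Y$: if $e\in\alpha(x,p)$ then, since $e$ is an endpoint of $X$, $e$ must be an endpoint of the arc $\alpha(x,p)$, which forces $x=e$ (here I use $e\neq p$); but $e\in V$ excludes $e$ from the index set $X\setminus V$.

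The one nontrivial step, and the only place smoothness is used, is the closedness of $Y$. Given $y_n\to y$ with $y_n\in\alpha(x_n,p)$ and $x_n\in X\setminus V$, compactness of $X\setminus V$ yields a subsequence along which $x_n\to x\in X\setminus V$; smoothness at $p$ gives $\alpha(x_n,p)\to\alpha(x,p)$ in Hausdorff distance; and $y_n\to y$ with $y_n\in\alpha(x_n,p)$ then forces $y\in\alpha(x,p)\subset Y$. Planarity is used only in the concluding appeal to Theorem~\ref{colo}.
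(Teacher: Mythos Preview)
Your proof is correct and follows the same strategy as the paper: establish that $X$ is colocally connected at $e$, then invoke Theorem~\ref{colo}. The only difference is that the paper obtains colocal connectedness by citing \cite[Theorems 3.1 and 3.5]{km}, whereas you supply a short direct argument via the set $Y=\bigcup_{x\in X\setminus V}\alpha(x,p)$; your version is self-contained and makes transparent exactly where smoothness (Hausdorff convergence of the arcs $\alpha(x_n,p)$) enters.
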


\begin{proof}By \cite[Theorems 3.1 and 3.5]{km}, $X$ is colocally connected at $e$. By Theorem \ref{colo},   $e$ is accessible.\end{proof}

\begin{ur}The point $p$ could be an  inaccessible endpoint. See Figure 1.\end{ur}

\begin{ur}In Bellamy's dendroid \cite{bel}, the endpoints are the only accessible points. 
\end{ur}

\begin{figure}\includegraphics[scale=0.54]{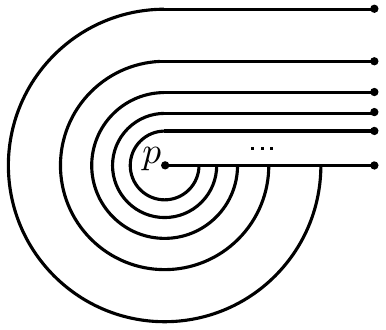}
\caption{A smooth plane dendroid in which $p$ is an inaccessible endpoint.}
\end{figure}

\section{Separation of endpoints}

\begin{figure}\includegraphics[scale=0.54]{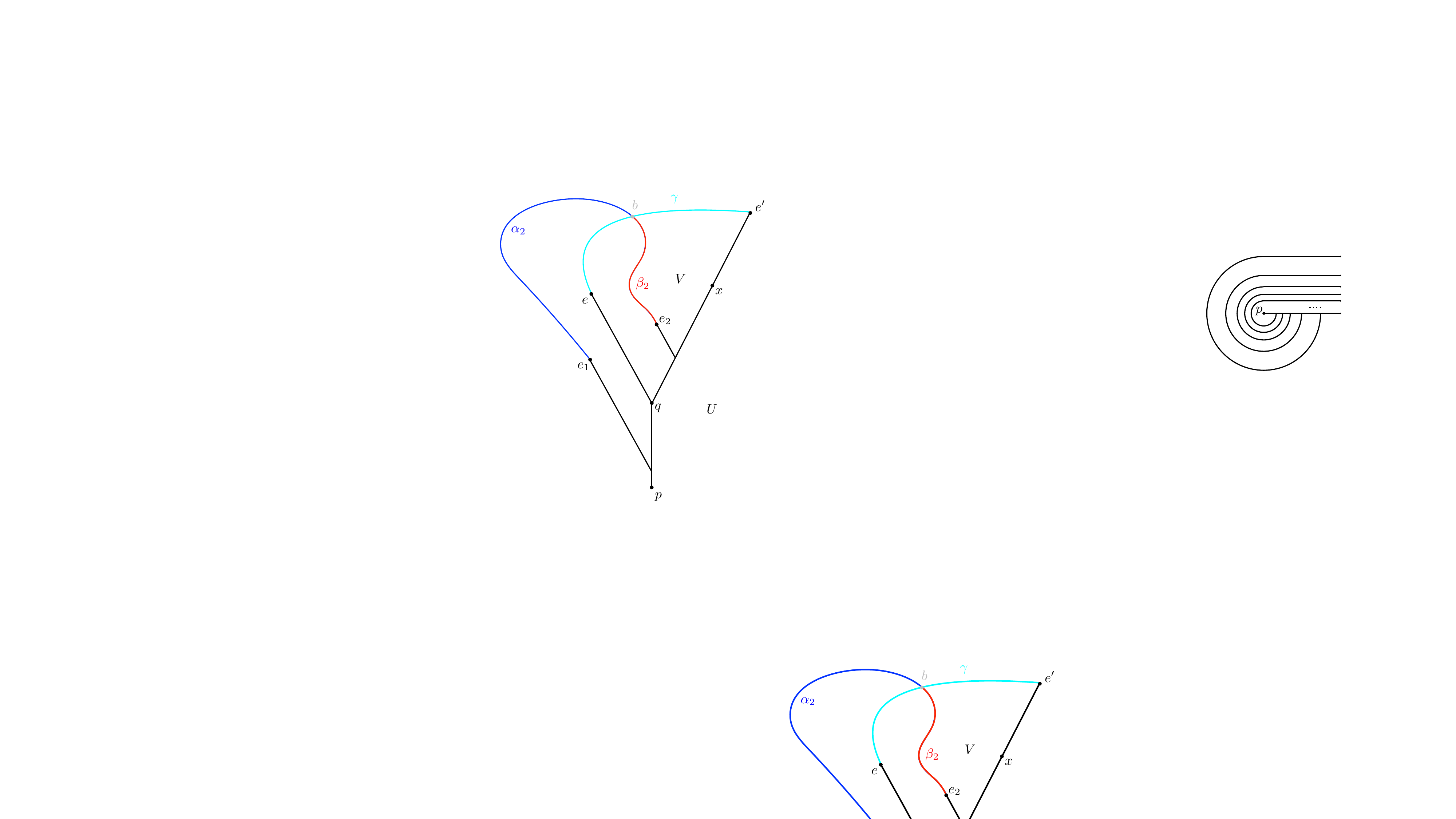}
\caption{Proof of Theorem \ref{circ}.}
\end{figure}

We   show that the endpoints of a smooth plane dendroid  have the circle-like property described in \S1. Then we examine separation properties of  hereditarily disconnected endpoint sets.  

 We begin with an easy consequence of the  $\mathrm{\theta}$-curve theorem \cite[Lemma 64.1]{mun}.
\begin{up}Let $O$ be a simple closed curve in the plane. Let $a,b,c,d\in O$ in cyclic order. Let $U$ and $V$ be the components of $\mathbb R^2\setminus O$. Let $\alpha$ and $\beta$ be arcs in $\overline U\setminus \{b,d\}$ and $\overline V\setminus \{b,d\}$, respectively, from $a$ to $c$. Then $\alpha\cup \beta$ contains a simple closed curve $\sigma$ which separates $b$ and $d$.\end{up}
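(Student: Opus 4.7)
The plan is to find a simple closed curve $\sigma \subset \alpha \cup \beta$ as a union of suitable sub-arcs of $\alpha$ and $\beta$, and then verify the separation claim using the theta-curve theorem.

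First, observe that $\alpha \cap \beta \subset \overline U \cap \overline V = O$ and avoids $\{b,d\}$. Let $A$ and $C$ denote the components of $O \setminus \{b,d\}$ containing $a$ and $c$, respectively, so that $\alpha \cap \beta$ meets both. Parameterize $\alpha$ with $\alpha(0) = a$ and $\alpha(1) = c$, and set
\[ s_C := \min \{ s : \alpha(s) \in \alpha \cap \beta \cap C\}, \quad s_A := \max \{ s < s_C : \alpha(s) \in \alpha \cap \beta \cap A\}. \]
Put $p := \alpha(s_A) \in A$ and $q := \alpha(s_C) \in C$. By maximality of $s_A$ and minimality of $s_C$, the open sub-arc $\alpha((s_A, s_C))$ avoids $\alpha \cap \beta$ and hence misses $\beta$ entirely. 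Let $\beta'$ be the sub-arc of $\beta$ joining $p$ and $q$, and set $\alpha' := \alpha|_{[s_A, s_C]}$; one then checks that $\alpha' \cap \beta' = \{p, q\}$, so $\sigma := \alpha' \cup \beta'$ is a simple closed curve contained in $\alpha \cup \beta$.

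Next I apply the theta-curve theorem to $\alpha'$, $\beta'$, and the sub-arc $\gamma \subset O$ from $p$ to $q$ passing through $b$. Since $p \in A$ and $q \in C$, the pair $\{p, q\}$ separates $b$ from $d$ on $O$: $b$ lies in the interior of $\gamma$, while $d$ lies in the interior of the complementary arc $\gamma^\ast \subset O$ from $p$ to $q$ through $d$. In the clean case where $\alpha' \cap O = \beta' \cap O = \{p, q\}$, the interiors of $\alpha'$, $\beta'$, and $\gamma$ lie in the pairwise disjoint sets $U$, $V$, and $O \setminus \{p, q\}$, so the theta-curve theorem decomposes $\mathbb R^2 \setminus (\alpha' \cup \beta' \cup \gamma)$ into three regions with boundaries $\sigma$, $\alpha' \cup \gamma$, and $\beta' \cup \gamma$. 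The latter two regions lie in $U$ and $V$ respectively and so cannot contain $d \in O$; hence $d$ lies in the region bounded by $\sigma$. Meanwhile $b \in \gamma \setminus \{p, q\}$ is not in $\sigma$ but lies on the common boundary of the other two regions, which merge in $\mathbb R^2 \setminus \sigma$ across the open arc $\gamma \setminus \{p, q\}$. Therefore $b$ and $d$ lie in the two distinct components of $\mathbb R^2 \setminus \sigma$.

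The main obstacle is that $\alpha'$ and $\beta'$ may meet $O$ at interior points beyond $\{p, q\}$, violating the disjoint-interior hypothesis of the theta-curve theorem. I plan to address this through a local analysis at $p$: since $\alpha'$ approaches $p$ from within $\overline U$ and $\beta'$ approaches from within $\overline V$, the curve $\sigma$ crosses $O$ transversally at $p$, so in a small neighborhood of $p$ the two directions along $O$ (toward $b$ and toward $d$) lie in different components of $\mathbb R^2 \setminus \sigma$. Any additional intersection of $\alpha'$ or $\beta'$ with $O$ is tangential---the arc remains on one side of $O$ locally---so a parity argument (counting transversal versus tangential crossings of $\sigma$ with the $O$-arc from $b$ to $d$ through $a$) shows that this local separation propagates globally, placing $b$ and $d$ in different components of $\mathbb R^2 \setminus \sigma$.
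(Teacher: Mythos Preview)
The paper gives no proof of this proposition at all; it simply labels it ``an easy consequence of the $\theta$-curve theorem'' and moves on. So there is nothing in the paper to compare your argument against line by line. Your construction of $\sigma$ is correct: the choice of $s_A,s_C$ indeed produces a sub-arc $\alpha'$ of $\alpha$ whose interior misses $\beta$ (using that $\alpha\cap\beta\subset O\setminus\{b,d\}=A\cup C$), and therefore $\alpha'\cup\beta'$ is a simple closed curve with $p\in A$ and $q\in C$. Your treatment of the clean case is also fine.

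The genuine gap is the last paragraph. For arbitrary arcs in the plane the words ``transversal'' and ``tangential'' have no meaning, and the sentence ``since $\alpha'$ approaches $p$ from within $\overline U$ and $\beta'$ approaches from within $\overline V$, the curve $\sigma$ crosses $O$ transversally at $p$'' is not justified: $\alpha'$ may run along $O$ for a positive length near $p$, and even if it does not, there is no parity count available for topological curves without further work. As written, this is a plan rather than a proof, and the plan does not obviously succeed.

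There is, however, an easy repair that stays close to what you wrote and makes the reduction to the clean case rigorous. Both $\overline U\setminus\{b,d\}$ and $\overline V\setminus\{b,d\}$ are simply connected (each is a closed disk with two boundary points removed). Hence $\alpha'$ is homotopic rel $\{p,q\}$ inside $\overline U\setminus\{b,d\}$ to an arc whose interior lies in $U$, and likewise $\beta'$ is homotopic rel $\{p,q\}$ inside $\overline V\setminus\{b,d\}$ to an arc with interior in $V$. Concatenating these homotopies gives a free homotopy of $\sigma$, within $\mathbb R^2\setminus\{b,d\}$, to a curve in the clean configuration. Your clean-case $\theta$-curve argument then shows that the winding numbers of the homotoped curve about $b$ and about $d$ differ, and since winding number is a homotopy invariant in $\mathbb R^2\setminus\{b,d\}$, the same holds for $\sigma$. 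This replaces the undefined parity count with a one-line homotopy reduction.
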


\begin{ut}\label{circ}Let $X$ be a plane dendroid, smooth at $p$. Let $e\in E(X)\setminus \{p\}$. For every $x\in X\setminus \alpha(e,p)$  there is a simple closed curve $\sigma\subset \mathbb R^2$ which separates $e$ from $x$ and has the property $|\sigma\cap E(X)|\leq 2$ and $p\notin \sigma\cap E(X)$.\end{ut}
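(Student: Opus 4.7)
The plan is to construct $\sigma$ as a small Jordan curve encircling $e$, using the accessibility of $e$ from Corollary \ref{acc} together with the machinery of Lemmas 3--5.

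First, I apply Corollary \ref{acc}: since $e \in E(X)\setminus\{p\}$, the endpoint $e$ is accessible, so I fix an arc $\gamma\colon [0,1]\to \mathbb R^2$ with $\gamma(0)=e$, $\gamma(1)\in \mathbb R^2\setminus X$, and $\gamma((0,1])\cap X=\varnothing$. Using the colocal connectedness of $X$ at $e$ (from \cite{km}, as in the proof of Theorem \ref{colo}) together with Lemmas 3--5, I obtain a simply connected domain $W'\subset\mathbb R^2$ with $e\in W'$, $\overline{W'}$ disjoint from $\{x,p,\gamma(1)\}$, $X\setminus W'$ connected, and (by Lemma 5) $W'\setminus X$ path-connected.

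Next, I pick a point $w\in\alpha(e,p)\cap W'$ close enough to $e$ that $w$ is interior to $\alpha(e,p)$ (hence $w\notin E(X)$), and a point $v\in\gamma\cap W'\setminus\{e\}$ close to $e$ (so $v\in W'\setminus X$). By planarity, the arc $\alpha(e,p)$ locally separates a small neighborhood of $w$ into two sides. Using the path-connectedness of $W'\setminus X$, I construct two arcs $\sigma_1,\sigma_2\subset (W'\setminus X)\cup\{w\}$ from $w$ to $v$, each approaching $w$ from one of the two sides. After removing overlaps, the union $\sigma:=\sigma_1\cup\sigma_2$ is a simple closed curve; I arrange the construction so that $\sigma$ encircles the sub-arc $\alpha(e,w)\subset\alpha(e,p)$, placing $e$ in the bounded component of $\mathbb R^2\setminus \sigma$.

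Finally, I verify: since $\sigma\setminus\{w\}\subset W'\setminus X$, we have $\sigma\cap X=\{w\}$, and since $w\notin E(X)$ it follows that $|\sigma\cap E(X)|=0\leq 2$; also $p\notin\overline{W'}\supset\sigma$, so $p\notin\sigma\cap E(X)$. By the Jordan curve theorem, $e$ lies in the bounded component of $\mathbb R^2\setminus\sigma$ (by construction) while $x\notin\overline{W'}$ lies in the unbounded component, so $\sigma$ separates $e$ from $x$. The main obstacle is the construction in the second paragraph: although $W'\setminus X$ is path-connected, ensuring that $\sigma_1\cup\sigma_2$ winds around $e$ (rather than some other region of $W'\setminus X$) requires a careful planar argument. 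The accessibility arc $\gamma$ is essential here: it anchors the curve close to $e$ by providing the point $v\in W'\setminus X$ arbitrarily near $e$, so that the two sides of $\alpha(e,p)$ near $w$ can be joined through a region adjacent to $e$, producing a loop that encloses $\alpha(e,w)$.
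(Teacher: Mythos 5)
Your construction has a fatal gap at the point you half-identify but misdiagnose. The arcs $\sigma_1,\sigma_2\subset(W'\setminus X)\cup\{w\}$ from $w$ to $v$ each meet $X$ only in $\{w\}$, so each one is literally an arc witnessing that $w$ is accessible from $\mathbb R^2\setminus X$ (indeed from two sides of $\alpha(e,p)$). But $w$ is an interior point of $\alpha(e,p)$, hence not an endpoint, and the only points of a smooth plane dendroid that are guaranteed to be accessible are the endpoints. In Bellamy's dendroid --- which the paper explicitly notes has \emph{no} accessible points other than endpoints (Remark 8) --- no choice of $w$ on $\alpha(e,p)\setminus\{e\}$ admits even one such arc, so $\sigma_1$ and $\sigma_2$ do not exist and no simple closed curve with $\sigma\cap X=\{w\}$ and $w\notin E(X)$ exists at all. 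You are in effect trying to prove the stronger statement $|\sigma\cap X|=1$ with $\sigma\cap E(X)=\varnothing$, which is false; the theorem's allowance of up to two endpoints on $\sigma$ is not slack but a necessity. The ``winding'' issue you flag as the main obstacle is real but secondary; the construction dies one step earlier.

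The paper's proof avoids this by never demanding that $\sigma$ reach $X$ through the complement at a non-endpoint. It first builds a Jordan curve $\gamma\cup\alpha(e,q)\cup\alpha(e',q)$ (where $q$ is the branch point of $\alpha(e,p)$ and $\alpha(x,p)$, $e'$ is an endpoint beyond $x$, and $\gamma$ is an accessibility arc joining the two accessible endpoints $e,e'$ through $\mathbb R^2\setminus X$), with $e$ and $x$ on its two ``shores.'' It then joins $q$ to a point $b\in\gamma$ by arcs $\alpha,\beta$ lying in the closures of the two complementary sides, where each of $\alpha,\beta$ is allowed to contain a whole sub-arc \emph{of} $X$ of the form $\alpha(e_i,q)$ terminating at an accessible endpoint $e_i\neq p$, followed by an accessibility arc from $e_i$ to $b$; such a sub-arc of $X$ contributes only $e_i$ to $E(X)$ since interior points of arcs are never endpoints. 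The $\theta$-curve theorem (Proposition 8) then extracts from $\alpha\cup\beta$ a simple closed curve separating $e$ from $x$ and meeting $E(X)$ in at most $\{e_1,e_2\}$. If you want to salvage your approach, you must likewise let $\sigma$ run along arcs of $X$ that exit to the complement only at endpoints; insisting that $\sigma$ cross $\alpha(e,p)$ transversally at a single non-endpoint cannot work.
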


\begin{proof}Let $e'\neq p$ be an endpoint of a maximal arc in $X$ extending $\alpha(x,p)$. Let $\gamma$ be an arc from $e$ to $e'$ such that $\gamma\cap X=\{e,e'\}$, as provided by Corollary \ref{acc} and the fact that $\mathbb R^2\setminus X$ is path-connected.  There exists $q\in X$ such that $\alpha(q,p)=\alpha(e,p)\cap \alpha(x,p)$. The simple closed curve $\gamma\cup \alpha(e,q)\cup \alpha(e',q)$ separates the plane into two components $U$ and $V$. Without loss of generality,  $p\in \overline U$. Let $b\in \gamma$.

\smallskip

\textit{Claim 1: There is an arc $\alpha\subset \overline U \setminus \{e,x\}$ from $q$ to $b$  such that $\alpha\cap E(X)\subset \{e_1\}$ for some $e_1\neq p$.} 

\smallskip

Let us first assume that there exists a sequence of points $x_n\in U\cap X\setminus \alpha(p,q)$ such that $x_n\to p$. Since $X$ is smooth at $p$ we know  $\alpha(x_n,p)\to p$ and so eventually $\alpha(x_n,p)$ misses $e$ and $x$.  Fix $n$ sufficiently large and let $\alpha_1$ be a maximal arc in $X$ containing $\alpha(x_n,p)$. Then $\alpha_1$ has an endpoint $e_1\in U\setminus \{p\}$. By Corollary 6,  there is an arc $\alpha_2$ from $e_1$ to $b$ such that $\alpha_2\cap X=\{e_1\}$. Put $\alpha=\alpha(e_1,q)\cup \alpha_2$. 

In the other case that the sequence $x_n$ does not exist,  there is an open set $W\subset \mathbb R^2$ such that $p\in W$ and $W\cap U\cap X\subset \alpha(p,q)$. Now there exists $y\in \alpha(p,x)\setminus \{p,x\}$ which is accessible from $U\setminus X$. Then $\alpha(y,q)$ extends to an arc $\alpha$ from $q$ to $b$, which misses $e$ and $x$ and contains  no endpoints. 

\smallskip

\textit{Claim 2: There is an arc $\beta\subset \overline V \setminus \{e,x\}$ from $q$ to $b$ such that $\beta\cap E(X)\subset \{e_2\}$ for some $e_2\neq p$.}

\smallskip

 The proof is similar to that of Claim 1, using the fact that $X\cap \overline V$ is a dendroid smooth at $q$.  Assume that there exist $x_n\in V\cap X$ such that $x_n\to q$. Otherwise $q$ is accessible from $V$ and the claim becomes trivial. Eventually $\alpha(x_n,q)$ misses $e$ and $x$. The arc $\alpha(x_n,q)$ extends to an arc $\beta_1$ with an endpoint $e_2\in V\setminus \{p\}$. By Corollary 6, there is an arc $\beta_2$  from $b$ to $e_2$ such that $\beta_2\cap X=\{e_2\}$. Put $\beta=\alpha(e_2,q)\cup \beta_2$. 
 
 \smallskip

Now apply Proposition \ref{circ}  to $\alpha\cup \beta$.  See Figure 2.   \end{proof}

\begin{uc}Let $X$ be a  dendroid in the plane which is smooth at $p$. Within the space $E(X)\setminus \{p\}$, every two points are separated by two other points.\end{uc}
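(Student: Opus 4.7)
The plan is to deduce this directly from Theorem \ref{circ} by converting the planar separation by $\sigma$ into a separation inside the subspace $E(X)\setminus\{p\}$. Concretely, fix two distinct points $e,x\in E(X)\setminus\{p\}$. Because $x$ is an endpoint and $x\neq e,p$, it cannot lie in the interior of $\alpha(e,p)$, nor at either endpoint, so $x\in X\setminus\alpha(e,p)$. Thus Theorem \ref{circ} applies and yields a simple closed curve $\sigma\subset\mathbb R^2$ that separates $e$ from $x$, with $|\sigma\cap E(X)|\leq 2$ and $p\notin\sigma\cap E(X)$.

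Next I would name the (at most two) points of $\sigma\cap E(X)$ as $y_1,y_2\in E(X)\setminus\{p\}$, and let $U,V$ be the two complementary components of $\mathbb R^2\setminus\sigma$, chosen so that $e\in U$ and $x\in V$. Then every endpoint of $X$ distinct from $p,y_1,y_2$ lies in $\mathbb R^2\setminus\sigma=U\cup V$, so the two sets $(E(X)\setminus\{p,y_1,y_2\})\cap U$ and $(E(X)\setminus\{p,y_1,y_2\})\cap V$ are disjoint, relatively open in $E(X)\setminus\{p,y_1,y_2\}$, and cover it. They are both nonempty since they contain $e$ and $x$ respectively, so they form a separation. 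This exhibits $\{y_1,y_2\}\subset E(X)\setminus\{p\}$ as a two-point separator of $e$ and $x$ in the subspace $E(X)\setminus\{p\}$.

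The only genuinely substantive step has already been carried out in Theorem \ref{circ}; there is no real obstacle here beyond being careful that the two separating points genuinely belong to $E(X)\setminus\{p\}$ (guaranteed by the clause $p\notin\sigma\cap E(X)$) and that $x\notin\alpha(e,p)$ so that Theorem \ref{circ} is applicable. If $\sigma\cap E(X)$ happens to contain fewer than two points, I would simply pad the separator with an arbitrary additional endpoint distinct from $p,e,x$ (available whenever $E(X)\setminus\{p\}$ is not already trivially two-point-separated), or note that a one-point or empty separator is \emph{a fortiori} a two-point separator in the conventional reading.
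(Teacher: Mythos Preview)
Your proposal is correct and follows exactly the route the paper intends: the paper states the corollary immediately after Theorem~\ref{circ} with no separate proof, treating it as an immediate consequence, and your argument spells out precisely that deduction. The only point worth tightening is the edge case where $|\sigma\cap E(X)|<2$; rather than padding, it is cleanest to read ``separated by two other points'' as ``separated by a set of at most two other points'', which is what Theorem~\ref{circ} literally delivers.
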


A space $X$ is \textbf{hereditarily disconnected} if $X$ contains no non-degenerate connected set, and  \textbf{totally disconnected} if every two points of $X$ are contained in disjoint clopen sets. The  \textbf{quasicomponent} of a point $x$ in a space $X$ is the intersection of all clopen subsets of $X$ which contain $x$.

 Clearly every totally disconnected space is hereditarily disconnected. We will prove a near converse for endpoint sets (see Theorem 12).

\begin{ul}\label{po}Let $Q$ be a quasicomponent of a space $X$. Let $p\in X$. Suppose that $U$ is open in $X$,  $\partial U\subset Q$, $|\partial U|=2$, and $U\setminus \{p\}$ is a non-empty subset of $Q$. Then $Q$ contains a non-degenerate connected subset of $X$.  \end{ul}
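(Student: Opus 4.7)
Write $\partial U = \{a,b\}$ with $a\neq b$, so $\overline U = U\cup\{a,b\}$ and $\overline U\setminus\{p\}\subseteq Q$. The plan is to produce a connected component $V$ of $U$ with $p\notin V$ whose closure $\overline V$ is non-degenerate: since $V$ is closed in $U$, one has $\overline V\setminus V\subseteq \overline U\setminus U=\{a,b\}$, so $\overline V\subseteq V\cup\{a,b\}\subseteq Q$ and $\overline V$ is connected as the closure of a connected set. If $\overline V\supsetneq V$ (so that $\overline V$ meets $\{a,b\}$) or $V$ is itself non-degenerate, then $\overline V$ is the desired non-degenerate connected subset of $Q$.

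If some component $V$ of $U$ with $p\notin V$ is non-degenerate, that $V$ works. Otherwise every such component is a singleton $\{v\}$, and I would use the quasicomponent property to rule this out. Namely, if $\{v\}$ were open in $X$, then $\{v\}$ would be clopen and nonempty, and the quasicomponent property would force $Q\subseteq\{v\}$---contradicting $a\in Q\setminus\{v\}$. So $v$ must be a non-isolated accumulation point of $U\setminus\{v\}$, and I would refine this observation using $|\partial U|=2$ to produce either a non-degenerate component after all or a direct clopen separation of $\overline U$: any clopen partition $\overline U = A\sqcup B$ with $\{a,b\}\subseteq A$ forces $B\subseteq U$ to be clopen in $X$ (open because $B\subseteq U$ is open in $\overline U$, closed because $\overline U$ is closed in $X$), and then $B\cap Q\neq\emptyset$ yields $Q\subseteq B$ in contradiction to $a\in Q\setminus B$, so no such partition exists and the accumulation analysis collapses.

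The remaining case is $U$ connected with $p\in U$, so $\overline U$ is non-degenerate and connected with $\overline U\setminus\{p\}\subseteq Q$. I would invoke the classical fact that every non-degenerate continuum contains a non-degenerate proper subcontinuum avoiding any prescribed point (proved by boundary bumping in a small closed ball around a point of $\overline U\setminus\{p\}$), applied to the compact continuum $\overline U$ and to $p$, to obtain a non-degenerate subcontinuum of $\overline U\setminus\{p\}\subseteq Q$. The principal obstacle is the middle step---excluding the configuration in which every component of $U$ not containing $p$ is a singleton with closure missing $\{a,b\}$---together with this appeal to continuum theory; both are made possible by the compact metric structure of the ambient dendroid in the paper's setting.
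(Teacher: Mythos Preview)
Your argument has two genuine gaps.

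First, the lemma is stated for an arbitrary space $X$, yet your third paragraph invokes boundary bumping, which requires $\overline U$ to be a (compact metric) continuum. You acknowledge this, but the lemma as written does not carry those hypotheses. In fact this case is trivial without continuum theory: if $U$ (hence $\overline U$) is connected and $p\in\overline U$, then $p\in Q$ automatically. Indeed, if $p\notin Q$ there is a clopen $A\subset X$ with $Q\subset A$ and $p\notin A$; then $\overline U\setminus\{p\}\subset Q\subset A$ and $p\in\overline U\setminus A$, so $A$ disconnects $\overline U$, a contradiction. Thus $\overline U\subset Q$ is itself the desired non-degenerate connected set, and no appeal to boundary bumping is needed.

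Second, and more seriously, your middle paragraph is not a proof. From ``every component of $U$ avoiding $p$ is a singleton'' you note correctly that no such singleton is isolated, and you record the (correct and relevant) observation that a clopen piece $B$ of $\overline U$ missing both boundary points would be clopen in $X$ and yield a contradiction. But you never connect these two facts; the sentence ``I would refine this observation \ldots\ and the accumulation analysis collapses'' is a promise, not an argument. Your observation only rules out clopen splittings of $\overline U$ with $\{a,b\}$ on one side; it says nothing when $a$ and $b$ land in different pieces, and it is exactly that configuration that must be analysed.

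The paper's proof closes this gap with a short two-step split, and it works for arbitrary $X$. Case~1 assumes $\overline U\subset Q$: if $\overline U$ is connected we are done; otherwise write $\overline U=A\sqcup B$ (relatively clopen) with $a\in A$, $b\in B$, and pick $x''\in U$, say $x''\in B$. If $B$ is connected, it is a non-degenerate connected subset of $Q$. If not, split $B=C\sqcup D$ with $b\in C$; then $D\subset\overline U\setminus\{a,b\}=U$, so $D$ is clopen in $X$, lies in $Q$, but misses $a\in Q$ --- contradicting that $Q$ is a quasicomponent. Case~2 ($p\notin Q$) reduces to Case~1 by intersecting $U$ with a clopen set separating $Q$ from $p$. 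This is the missing idea: one split to separate the two boundary points, then a second split to produce a piece missing both.
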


\begin{proof}Either $\overline U\subset Q$ or  $p\notin Q$.

\textit{Case 1:  $\overline U\subset Q$.} Let $\partial U=\{x,x'\}$ and let $x''\in U$. If $\overline U$ is not connected then it is a union of two disjoint closed sets $A$ and $B$ such that $x\in A$ and $x'\in B$. Without loss of generality, $x''\in B$.  If $B$ is not connected then it is the union of two non-empty disjoint closed sets $C$ and $D$ with $x'\in C$. Then $D$ is a clopen subset of $X$ containing some but not all of $Q$, a contradiction. So either $\overline U$ or $B$ must have been connected.

\textit{Case 2: $p\notin Q$.}  Let $A$ be a clopen subset of $X$ containing $Q$ and missing $p$. Then $V=A\cap U$ satisfies the hypotheses of the theorem, and $\overline V\subset Q$. By Case 1, $\overline V$ contains a non-degenerate connected set.\end{proof}

\begin{ut}\label{hdtd}Let $X$ be a smooth plane dendroid  with initial point $p$.  If $E(X)$ is hereditarily disconnected, then no two points of $E(X)\setminus \{p\}$ belong to the same quasicomponent of $E(X)$. In particular,  $E(X)\setminus \{p\}$ is totally disconnected. \end{ut}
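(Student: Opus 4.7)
The plan is to argue by contradiction: suppose $E(X)$ is hereditarily disconnected and that two distinct points $e_1, e_2 \in E(X) \setminus \{p\}$ belong to the same quasicomponent $Q$ of $E(X)$. I will produce a non-degenerate connected subset of $E(X)$, contradicting hereditary disconnectedness.

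First, I would apply Theorem \ref{circ} with $e = e_1$ and $x = e_2$ (noting $e_2 \notin \alpha(e_1, p)$ since $e_2 \neq p$ is an endpoint distinct from $e_1$) to obtain a simple closed curve $\sigma \subset \mathbb{R}^2$ separating $e_1$ from $e_2$, with $F := \sigma \cap E(X)$ satisfying $|F| \leq 2$ and $p \notin F$. Let $U_\sigma$ be the component of $\mathbb{R}^2 \setminus \sigma$ containing $e_1$ and set $U_0 := U_\sigma \cap E(X)$; then $U_0$ is open in $E(X)$, contains $e_1$ but not $e_2$, and $\partial U_0 \subset F$.

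Second, I shrink $U_0$ using clopen sets supplied by the quasicomponent definition. For each $q \in F \setminus Q$ pick a clopen $A_q \subset E(X)$ with $Q \subset A_q$ and $q \notin A_q$; if $p \notin Q$, choose also a clopen $A_p \supset Q$ with $p \notin A_p$. Let $A$ be the intersection of these finitely many clopens, and set $U := U_0 \cap A$. Then $U$ is open in $E(X)$, $e_1 \in U$, $e_2 \notin U$, $\partial U \subset F \cap Q \subset Q$, and $p \notin U$ unless $p \in Q$. If $\partial U = \emptyset$, then $U$ is clopen in $E(X)$ and separates $e_1$ from $e_2$, contradicting $e_1, e_2 \in Q$. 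Otherwise $|\partial U| \in \{1,2\}$ with $\partial U \subset Q$, and I invoke the argument of Lemma \ref{po}: either $\overline{U}$ is connected, in which case it is a non-degenerate (it contains $e_1$ plus a boundary point) connected subset of $E(X)$, contradicting hereditary disconnectedness, or $\overline{U}$ splits as $A' \sqcup B$ and the analysis of Lemma \ref{po} extracts either a non-degenerate connected subset of $E(X)$ (again a contradiction) or a clopen subset of $E(X)$ separating $e_1$ from another point of $Q$, contradicting the quasicomponent hypothesis.

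The hard part is verifying that Lemma \ref{po}'s argument goes through without its full hypothesis $U \setminus \{p\} \subset Q$: the key observation is that one may take the ``witness'' point $x'' \in U$ in the lemma's proof to be $e_1 \in U \cap Q$, so that in the final sub-case the extracted clopen set $D$ separates $e_1$ from a boundary point of $U$, both in $Q$, yielding the desired contradiction. The ``in particular'' clause is immediate: if distinct points of $E(X) \setminus \{p\}$ always lie in distinct quasicomponents of $E(X)$, then every such pair is separated by a clopen subset of $E(X)$, which restricts to a clopen subset of $E(X) \setminus \{p\}$ that separates them there; hence $E(X) \setminus \{p\}$ is totally disconnected.
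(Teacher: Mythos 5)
Your construction of $U$ is sound: Theorem \ref{circ} does apply with $e=e_1$, $x=e_2$ (an endpoint distinct from $e_1$ and $p$ cannot lie on $\alpha(e_1,p)$), the boundary computation $\partial U\subset F\cap Q$ is correct since the sets you intersect with are clopen, and the case $\partial U=\varnothing$ is correctly dispatched. The gap is in the final step, and it is exactly the point you flag as ``the hard part.'' The proof of Lemma \ref{po} does not merely need a witness point $x''\in U\cap Q$; it hinges on the hypothesis $U\setminus\{p\}\subset Q$, which guarantees that any clopen piece $D$ split off from $\overline U$ and missing $\partial U$ lies inside $U\subset Q\cup\{p\}$, hence is a clopen subset of $E(X)$ containing \emph{some but not all} of $Q$ --- that is where the contradiction comes from. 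Your $U$ satisfies no such containment: it may contain many endpoints outside $Q$. Taking $x''=e_1$ only rescues the sub-case where the discarded piece $D$ happens to contain $e_1$; if the splitting $B=C\sqcup D$ puts $e_1$ in $C$ together with $x'$ and $D\cap Q=\varnothing$, then $D$ is a harmless clopen subset of $E(X)$, and you obtain neither a non-degenerate connected set nor a clopen set disconnecting $Q$. Discarding $D$ and re-splitting launches a process with no termination argument. This is not a repairable bookkeeping issue: in a general hereditarily disconnected space a configuration of the form ``open $U$ with two-point boundary in $Q$, $e_1\in U\cap Q$, $e_2\in Q\setminus\overline U$'' can occur with no non-degenerate connected subset anywhere --- hereditary disconnectedness does not imply total disconnectedness, which is precisely what the theorem is asserting for these endpoint sets. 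So some planar/dendroid input beyond Theorem \ref{circ} applied once is unavoidable.

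The paper supplies that input with a different curve and a dichotomy. It takes $\sigma=\alpha(e,e')\cup\gamma$, where $\gamma$ is an accessibility arc meeting $X$ only at $e,e'$, so that $\sigma\cap E(X)=\{e,e'\}\subset Q$, and then proves that for one of the two complementary components, say $U$, one has $U\cap E(X)\setminus\{p\}\subset Q$: otherwise both sides contain endpoints $e_1,e_2\notin Q$, and the construction of Theorem \ref{circ} run through those two points yields a simple closed curve $\xi$ separating $e$ from $e'$ with $\xi\cap E(X)=\{e_1,e_2\}$; intersecting a side of $\xi$ with a clopen set containing $Q$ and missing $e_1,e_2$ disconnects $Q$, a contradiction. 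Only after securing the containment $U\cap E(X)\setminus\{p\}\subset Q$ does Lemma \ref{po} legitimately apply. You would need to add this dichotomy step (or an equivalent use of the planar structure) for your argument to close.
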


 \begin{proof}Suppose that $Q$ is  a  quasicomponent   of $E(X)$ and  $|Q\setminus \{p\}|\geq 2$. We will show that $Q$ contains a non-degenerate connected set. To that end, let $e,e'\in Q\setminus \{p\}$. Let $\gamma$ be an arc with endpoints  $e$ and $e'$ such that $\gamma\cap X=\{e,e'\}$ (which exists by Theorem 6). Let $\sigma=\alpha(e,e')\cup \gamma$. Let $U$ and $V$ be the components of $\mathbb R^2\setminus\sigma$.

We claim that $U\cap E(X)\setminus \{p\}\subset Q$ or $V\cap E(X)\setminus \{p\}\subset Q$. If not, then  $U$ and $V$ each contain points of  $E(X)\setminus \{p\}$ outside of $Q$ (say $e_1$ and $e_2$), and by Theorem \ref{circ} there would exist a simple closed curve $\xi$ separating $e$ and $e'$, such that $\xi\cap E(X)=\{e_1,e_2\}$. Let $W$ be a component of $\mathbb R^2\setminus \xi$. There is a clopen  $A\subset E(X)$  such that $Q\subset A$  and $A$ misses both $e_1$ and $e_2$. Then $W\cap A$ is a clopen subset of $E(X)$ containing some but not all of $Q$, a contradiction.

Now assume that  $U\cap E(X)\setminus \{p\}\subset Q$. If this set is empty, then $V\cap E(X)\setminus \{p\}$ must be non-empty and contained in $Q$ by Theorem \ref{circ} and the arguments above. In either case,   $Q$ contains a non-degenerate connected set   by Lemma \ref{po}.
\end{proof}

\begin{ur}The following example shows that Corollary 10 and Theorem 12 cannot be improved by replacing $E(X)\setminus \{p\}$ with $E(X)$.   Consider the dendroid in Figure 1. Denote by $\alpha$ the horizontal arc containing $p$, whose opposite endpoint we label  $e$ (so $\alpha=\alpha(p,e)$).  Let $x_n$ denote the sequence of ramification points along $\alpha$ that converges to $p$. Let $e_n$ denote the sequence of endpoints that converges to $e$ at the right side of the figure. Replace each arc $\alpha(x_n,e_n)$ with a Lelek fan  $L_n$ whose vertex is attached at $x_n$, and which stretches all the way to $e_n$. This can be done so  that the $L_n$'s are disjoint and converge to $\alpha$. The endpoints of the resulting dendroid $X$ are $p$ and $e$, and the endpoints of  the individual $L_n$'s. Using the fact that $E(L_n)\cup \{x_n\}$ is connected (proved  in \cite{lel}), one can see that the quasicomponent of $p$ in $E(X)$ is $\{p,e\}$. Moreover, $p$ and $e$ cannot be separated by two other endpoints.  \end{ur}

\section{Dimension of endpoints}

\subsection{Preliminaries}

 A topological space $X$ is \textbf{zero-dimensional} if $X$ has a basis of clopen sets, and \textbf{almost zero-dimensional} if $X$ has a basis of neighborhoods which are intersections of clopen sets \cite{dvm}. Observe that almost zero-dimensional Hausdorff spaces are totally disconnected.

Zero-dimensional separable metric spaces  embed into $\mathbb R$ and are therefore linearly orderable. Endpoints of dendrites (locally connected dendroids) are known to be zero-dimensional.  Endpoints of smooth fans and $\mathbb R$-trees are known to be almost zero-dimensional \cite{ov2}.  The endpoint set of the Lelek fan  is  universal for almost zero-dimensional separable metric spaces \cite{dvm}.

A function $\varphi:Z\to [0,\infty)$ is \textbf{upper semi-continuous (USC)} if $\varphi^{-1}[0,t)$ is open for every $t>0$. The following is easily proved (cf.\ {\cite[Remark 4.2]{dvm}}).

\begin{up}\label{usc}Let $X$ be a topological space.  If $X$ is homeomorphic to the graph of a USC function with zero-dimensional domain, then $X$ is almost zero-dimensional. \end{up}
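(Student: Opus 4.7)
The plan is to verify almost zero-dimensionality of the graph $G := \gr\varphi \subseteq Z \times [0,\infty)$ directly: I want to exhibit, at each $(z_0, t_0) \in G$, arbitrarily small neighborhoods that are intersections of clopen subsets of $G$. I will assemble these neighborhoods from two families of distinguished subsets of $G$, one controlling each coordinate, and cut one against the other.

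For the $Z$-coordinate, observe that whenever $V \subseteq Z$ is clopen the cylinder $V^\ast := (V \times [0,\infty)) \cap G$ is clopen in $G$. For the $[0,\infty)$-coordinate I claim that for every $s \geq 0$ the upper slice $G_s := \{(z,\varphi(z)) : \varphi(z) \geq s\}$ is already a $C$-set. Indeed, USC of $\varphi$ makes $\{z \in Z : \varphi(z) < s\}$ open in $Z$; zero-dimensionality of $Z$ writes this open set as a union $\bigcup_\alpha U_\alpha$ of clopen subsets; hence
\[
G_s \;=\; \bigcap_\alpha (Z \setminus U_\alpha)^\ast,
\]
an intersection of clopen subsets of $G$.

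Given $(z_0, t_0) \in G$ and an open neighborhood $W$, I would shrink $W$ to a basic rectangle $(U \times J) \cap G$ with $U$ clopen in $Z$ and $J$ an open interval of half-length $\epsilon$ about $t_0$ (read $J = [0, \epsilon)$ when $t_0 = 0$). Using USC at $z_0$, refine to a clopen $V$ with $z_0 \in V \subseteq U$ and $\varphi(V) \subseteq [0, t_0 + \epsilon)$. Set $N := V^\ast \cap G_{\max(0,\,t_0 - \epsilon/2)}$. Then $N$ is an intersection of clopen subsets of $G$, the inclusion $N \subseteq W$ is immediate from the choices, and $(z_0, t_0)$ lies in the interior of $N$ because it belongs to the open subset $V^\ast \cap \{(z,\varphi(z)) : \varphi(z) > t_0 - \epsilon/2\}$ of $G$, which is contained in $N$.

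The main obstacle is handling the lower cutoff in the vertical direction: USC does not make $\{\varphi > s\}$ open, so a lower bound on $\varphi(z)$ cannot be enforced by an open condition in the product topology. The resolution is the second observation above --- $G_s$, while not open, is nonetheless a $C$-set by USC plus zero-dimensionality of $Z$, and interior membership of $(z_0,t_0)$ is automatic from the strict inequality $t_0 > t_0 - \epsilon/2$. Once this asymmetric treatment of the two sides of $t_0$ is identified, the rest is routine.
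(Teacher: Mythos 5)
Your proof is correct and is essentially the paper's argument: your sets $N = V^\ast \cap G_s$ are exactly the sets $(V \times [s,\infty)) \cap \gr(\varphi)$ that the paper's sketch asserts form a neighborhood basis of C-sets, and you have simply supplied the details (clopenness of the cylinders, the C-set property of the upper slices via USC plus zero-dimensionality, and interior membership) that the paper leaves to the reader.
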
 

\begin{proof}[Sketch of proof]If $Z$ is zero-dimensional and $\varphi:Z\to [0,\infty)$ is USC, then sets of the type $A\times [t,\infty)$, where $A$ is clopen in $Z$, yield a neighborhood basis of C-sets for the graph of $\varphi$.\end{proof}

A space $X$ is \textbf{zero-dimensional at} $x\in X$ if the point $x$ has a neighborhood basis of clopen sets. Put $$\Omega(X)=\{x\in X:X \text{ is zero-dimensional at }x\}$$ and $\Lambda(X)=X\setminus \Omega(X)$.

\begin{up}\label{sus}Let $X\subset \mathbb R^2$ be a Suslinian continuum in the plane. If $Y\subset X$ is almost zero-dimensional, then $Y$ is zero-dimensional. \end{up}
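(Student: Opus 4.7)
The plan is to prove the contrapositive: assuming $Y \subset X$ is almost zero-dimensional but not zero-dimensional, I will produce an uncountable, pairwise-disjoint family of non-degenerate subcontinua of $X$, which contradicts Suslinianness.

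First I would localize the failure of zero-dimensionality. Since $\Omega(Y)$ is open and zero-dimensional while $\Lambda(Y)=Y\setminus\Omega(Y)$ is closed, pick $y_0\in\Lambda(Y)$ together with a small closed planar disk $B$ centered at $y_0$, and restrict attention to $Y\cap B$. Almost zero-dimensionality supplies a decreasing sequence of clopen subsets $C_1\supset C_2\supset\cdots$ of $Y\cap B$ whose intersection $K=\bigcap_n C_n$ is a C-set neighborhood of $y_0$ of small diameter. Because $y_0\notin\Omega(Y)$, no individual $C_n$ can serve as an arbitrarily small clopen neighborhood of $y_0$, so I may fix $\varepsilon>0$ and pick points $q_n\in C_n$ with $d(q_n,y_0)\geq\varepsilon$ for every $n$.

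Next I would convert each clopen partition $Y\cap B=C_n\sqcup((Y\cap B)\setminus C_n)$ into a planar separation. Since the two pieces are disjoint compact sets in the plane, they can be enclosed by an open region $U_n\subset\mathbb{R}^2$ whose boundary is a simple closed curve disjoint from $Y\cap B$, with $C_n\subset U_n$ and $(Y\cap B)\setminus C_n\subset \mathbb{R}^2\setminus\overline{U_n}$. Choosing any $z_n\in (Y\cap B)\setminus C_n$ and an irreducible subcontinuum $T_n\subset X$ from $y_0$ to $z_n$, the continuum $T_n$ must meet the simple closed curve $\partial U_n$, and this crossing point lies in $X\setminus Y$; in particular every $T_n$ is non-degenerate.

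The main obstacle is turning the countable tower $(C_n)$ into an uncountable pairwise-disjoint family of subcontinua of $X$. My plan is a branching/Cantor-style argument: at each stage, use the fact that $y_0\notin\Omega(Y)$ to split the current clopen piece into two further clopen pieces, each of which still witnesses the failure of zero-dimensionality at $y_0$ and carries its own planar separation as above. Iterating produces a binary tree of clopen neighborhoods, and for each branch $\alpha\in 2^{\mathbb{N}}$ I would extract an irreducible subcontinuum $T_\alpha\subset X$ following the nested planar regions along $\alpha$. The planarity of $X$, combined with an application of the $\theta$-curve theorem (as in Proposition 8) to successive simple closed curves $\partial U_n$, is what I expect to force distinct branches to yield disjoint subcontinua, giving a $2^{\mathbb{N}}$-indexed disjoint family and thereby the desired contradiction with Suslinianness.
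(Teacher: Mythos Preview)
Your contrapositive strategy is reasonable in spirit, but two steps break down. First, the assertion that ``the two pieces are disjoint compact sets in the plane'' is false: $Y$ is an arbitrary subset of $X$, so $C_n$ and $(Y\cap B)\setminus C_n$ are only relatively clopen in $Y\cap B$ and need not be closed in $\mathbb{R}^2$. In the interesting cases their closures in $X$ will meet---this overlap is precisely what distinguishes almost zero-dimensional from zero-dimensional---so no simple closed curve separates them and your continua $T_n$ cannot be forced to cross a curve disjoint from $Y$. (The paper deals with the non-closedness of $Y$ by first passing, via Lavrentiev, to a $G_\delta$ extension inside a Polish almost zero-dimensional superspace, but even after that one does not obtain Jordan separations of clopen sets.)

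Second, the branching argument is not well-defined. When you split a clopen neighbourhood $C$ of $y_0$ into two disjoint clopen pieces, only the piece containing $y_0$ can ``witness the failure of zero-dimensionality at $y_0$''; the other piece may be a single point, so nothing forces a genuine binary tree. To run a Cantor-scheme of this kind one needs $\Lambda(Y)$ to be rich (e.g.\ to contain a perfect set), not merely a single point, and your invocation of the $\theta$-curve theorem for disjointness along distinct branches is a hope rather than an argument. The paper avoids all of this by citing two substantial external results: \cite[Theorem~A]{lvmmtv} shows $\Lambda(Y)$ is countable for $G_\delta$ subsets of Suslinian plane continua, and \cite[Theorem~1]{co2h} shows that an almost zero-dimensional space with countable $\Lambda$ is already zero-dimensional. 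The first of these packages exactly the delicate planar/Suslinian interaction you are attempting to carry out by hand.
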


\begin{proof}Let $Y\subset X$ be almost zero-dimensional. There is a Polish universal almost zero-dimensional space \cite{dvm}, so by Lavrentiev's theorem we may assume that $Y$ is a $G_{\delta}$-set.  By \cite[Theorem A]{lvmmtv}, $\Lambda(Y)$ is countable. By \cite[Theorem 1]{co2h}, $Y$ is zero-dimensional.\end{proof}

  \subsection{Results} We will show that the conclusion of Theorem 12 can be strengthened from \textit{totally disconnected} to \textit{almost zero-dimensional}, and apply the result to Suslinian 
 dendroids.

\begin{ul}\label{fe}Let $X$ be a smooth dendroid in the plane and $ e\in E(X)\setminus \{p\}$. Let $\mathcal U_e$ be the collection of all  open subsets $U$ of $X$ such that $e\in U$ and $\partial U\cap E(X)=\varnothing$.  Put $$F(e) =\bigcap_{U\in \;\mathcal U_e} \overline{U}.$$
If $E(X)$ is hereditarily disconnected, then $F(e)\subset \alpha(e,p)$.\end{ul}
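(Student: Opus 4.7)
The plan is to prove the contrapositive: for every $x \in X \setminus \alpha(e,p)$, I will exhibit some $U \in \mathcal U_e$ with $x \notin \overline{U}$, thereby witnessing $x \notin F(e)$. Applying Theorem~\ref{circ} to $e$ and $x$ yields a simple closed curve $\sigma \subset \mathbb R^2$ separating $e$ from $x$ with $\sigma \cap E(X) \subseteq \{e_1, e_2\}$ and $p \notin \{e_1,e_2\}$. Let $W$ be the component of $\mathbb R^2 \setminus \sigma$ containing $e$ and set $V_0 = W \cap X$. Then $V_0$ is open in $X$, contains $e$, satisfies $x \notin \overline{V_0}$, and $\partial V_0 \cap E(X) \subseteq \{e_1,e_2\}$. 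If this last intersection is empty then $V_0 \in \mathcal U_e$ and we are done.

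Otherwise, the strategy is to ``shave away'' small neighborhoods of the offending boundary endpoints. For each $e_i \in \partial V_0 \cap E(X)$, I aim to construct an open $N_i \subset X$ with $e_i \in N_i$, $\partial N_i \cap E(X) = \varnothing$, and $e,x \notin \overline{N_i}$. Granting such $N_i$, the set
\[ V := V_0 \setminus \bigcup_{i}\overline{N_i} \]
is open, contains $e$, avoids $x$ in its closure, and (since the $e_i$ lie in the open sets $N_i$ while the $\partial N_i$ contribute no endpoints) satisfies $\partial V \cap E(X) = \varnothing$. Hence $V \in \mathcal U_e$ gives the required contradiction to $x \in F(e)$.

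The construction of each $N_i$ is the heart of the argument, and it is where the hypothesis that $E(X)$ is hereditarily disconnected enters. First I would invoke Theorem~\ref{hdtd} to get that $E(X) \setminus \{p\}$ is totally disconnected, so that $e_i$ can be separated from the points $e$ and $x$ by clopen sets in $E(X) \setminus \{p\}$ (if $x$ is not an endpoint, only separation from $e$ is needed at this stage). To promote such separation to an open set in $X$ with boundary avoiding $E(X)$, I would iterate Theorem~\ref{circ}: apply it to the pairs $(e_i,e)$ and $(e_i,x)$ to obtain an initial planar region around $e_i$ whose intersection with $X$ is a candidate $N_i^{(0)}$; its boundary in $X$ may carry up to four new problematic endpoints. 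Applying Theorem~\ref{circ} again to each such boundary endpoint against $e_i$ refines the region into $N_i^{(1)} \subseteq N_i^{(0)}$, and so on.

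The main obstacle will be making this iteration terminate in finitely many steps, yielding $N_i = N_i^{(k)}$ with $\partial N_i \cap E(X) = \varnothing$. The expectation is that nontermination would produce an infinite family of boundary endpoints whose accumulation (by compactness of $X$ and smoothness at $p$), together with the associated planar circles, forces a non-degenerate connected subset of $E(X)$---contradicting hereditary disconnectedness in the spirit of the proof of Theorem~\ref{hdtd}, which uses Lemma~\ref{po}. The careful topological bookkeeping needed to carry out this termination argument, parallel to but more intricate than the argument establishing Theorem~\ref{hdtd}, is the principal difficulty of the proof; everything else is packaging.
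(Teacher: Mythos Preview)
Your opening move matches the paper exactly: take $x\in X\setminus\alpha(e,p)$, apply Theorem~\ref{circ} to get an open $V_0$ with $e\in V_0$, $x\notin\overline{V_0}$, and $\partial V_0\cap E(X)\subset\{e_1,e_2\}\subset E(X)\setminus\{p\}$. The divergence is in how you kill the two boundary defects.

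You propose an iterated application of Theorem~\ref{circ} to build $N_i$ around each $e_i$, with a termination argument left as ``the principal difficulty.'' That difficulty is real, and you have not resolved it; as written this is a gap. But the gap is entirely avoidable, and the key observation is one you already have in hand. You invoke Theorem~\ref{hdtd} to get that $E(X)\setminus\{p\}$ is totally disconnected, hence there is a clopen $A\subset E(X)$ with $e\in A$ and $e_1,e_2\notin A$. The step you are missing is that any such clopen lifts \emph{directly} to an open set in $X$ with endpoint-free boundary: since $A$ is closed in $E(X)$ we have $\overline{A}^X\cap E(X)=A$, and likewise for $E(X)\setminus A$, so $A$ and $E(X)\setminus A$ are separated subsets of $X$; by hereditary normality of the metric space $X$ there are disjoint open $V,V'\subset X$ with $A\subset V$ and $E(X)\setminus A\subset V'$, and then $\partial V\cap E(X)=\varnothing$ automatically while $e_1,e_2\in V'$ forces $e_1,e_2\notin\overline{V}$.

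With this $V$ the paper simply takes $V_0\cap V\in\mathcal U_e$ (your subtraction $V_0\setminus\overline{N_1}\setminus\overline{N_2}$ would work equally well once you build $N_i$ around $e_i$ by the same lifting). No iteration, no termination argument, no further use of Theorem~\ref{circ}. The entire proof is three lines once you see that clopen-in-$E(X)$ is the same data as open-in-$X$-with-endpoint-free-boundary.
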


\begin{proof}Suppose $x\in X\setminus \alpha(e,p)$. By Theorem \ref{circ} there is an open  set $U$ such that  $e\in U$, $x\notin \overline {U}$,   $|\partial U\cap E(X)|\leq 2$ and $p\notin \partial U\cap E(X)$. Since $E(X)$ is hereditarily disconnected, by Theorem \ref{hdtd} we can find an open set $V$ such that  $e\in V$, $V\cap \partial U\cap E(X)=\varnothing$ and $ \partial V\cap E(X)=\varnothing$. Then $U\cap V\in \mathcal U_e$ and $x\notin \overline{U\cap V}$. Therefore $x\notin F(e)$. \end{proof}

\begin{ut}\label{az}Let $X$ be a smooth plane dendroid with initial point $p$. If $E(X)$ is hereditarily disconnected, then $E(X)\setminus\{p\}$ is almost zero-dimensional.  \end{ut}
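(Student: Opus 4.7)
The strategy is to invoke Proposition \ref{usc} by realizing $E(X)\setminus\{p\}$ as homeomorphic to the graph of a USC function on a zero-dimensional space. Fix a radially convex metric $d$ on $X$ with respect to $p$; smoothness then ensures that the length function $\ell:E(X)\setminus\{p\}\to[0,\infty)$, $\ell(e)=d(e,p)$, is continuous. The task reduces to producing the zero-dimensional domain and verifying the graph-of-USC structure.

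The decisive input is Lemma \ref{fe}: for each $e\in E(X)\setminus\{p\}$, $F(e)\subset \alpha(e,p)$, and since the only endpoints of $X$ lying on $\alpha(e,p)$ are $e$ and possibly $p$, we get $F(e)\cap (E(X)\setminus\{p\})=\{e\}$. Moreover, each $U\in \mathcal{U}_e$ yields a clopen subset $U\cap (E(X)\setminus\{p\})$ of $E(X)\setminus\{p\}$: its boundary in $E(X)\setminus\{p\}$ is contained in $\partial U\cap E(X)=\varnothing$. Hence the family of clopens $\{U\cap (E(X)\setminus\{p\}):U\in \mathcal{U}_e\}$ has intersection $\{e\}$, exhibiting each singleton $\{e\}$ as a C-set of $E(X)\setminus\{p\}$.

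I would then construct a zero-dimensional parameter space $Z$ together with a canonical map $\pi:E(X)\setminus\{p\}\to Z$ such that $e\mapsto (\pi(e),\ell(e))$ is a homeomorphism onto the graph of a USC function $\varphi:Z\to[0,\infty)$. A natural candidate for $\pi(e)$ is a representative extracted from $F(e)$---for instance, the point of $F(e)$ nearest $p$ in the radial order---possibly augmented by additional ``direction'' coordinates encoded by the clopens of $\mathcal{U}_e$. Upper semicontinuity of $\varphi$ would follow from smoothness at $p$, while zero-dimensionality of $Z$ should be extracted by combining the hereditary disconnectedness of $E(X)$, Theorem \ref{hdtd}, the clopen structure afforded by Lemma \ref{fe}, and the planar separation of Theorem \ref{circ}. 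The principal obstacle is verifying that $Z$ is zero-dimensional: although each $F$-set is linearly ordered (being contained in an arc), limits of distinct endpoints may accumulate on interior points of each other's arcs, producing candidate connected subsets of $Z$ that must be broken apart using the planar clopen structure inherited from the preceding sections.
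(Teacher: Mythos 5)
There is a genuine gap. Your strategy is the right one (realize $E(X)\setminus\{p\}$ as the graph of a USC function over a zero-dimensional space and quote Proposition \ref{usc}, with $\varphi(e)=d(e,p)$ for a radially convex $d$, and with Lemma \ref{fe} plus the clopen sets $U\cap(E(X)\setminus\{p\})$, $U\in\mathcal U_e$, as the main tools), but the proof stops exactly where the work is. You never define the zero-dimensional domain $Z$ or the map $\pi$; your candidate ($\pi(e)=$ a distinguished point of $F(e)$, ``augmented by direction coordinates'') is left unspecified, and you yourself flag that you cannot verify $Z$ is zero-dimensional. Also note that your intermediate observation --- each singleton of $E(X)\setminus\{p\}$ is an intersection of clopen sets --- is just total disconnectedness, which is already Theorem \ref{hdtd} and is strictly weaker than almost zero-dimensionality, so it cannot substitute for the missing construction.

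The paper closes this gap with a simple device that avoids any auxiliary parameter space: take $Z$ to be the \emph{same set} $Y=E(X)\setminus\{p\}$, retopologized by the topology $\mathcal W$ generated by the clopen subsets of $Y$. This $Z$ is zero-dimensional for free (each clopen subset of $Y$ remains clopen in $\mathcal W$), so the ``principal obstacle'' you name disappears; the real content is then two claims. First, $\varphi(e)=d(e,p)$ is USC on $Z$: if $e$ lies in the $\mathcal W$-closure of $\varphi^{-1}[t,\infty)$, choose a decreasing sequence $U_n(e)\in\mathcal U_e$ with $\bigcap_n\overline{U_n(e)}=F(e)$ (separability), pick witnesses $e_n\in U_n(e)$ with $\varphi(e_n)\ge t$ (possible because $U_n(e)\cap Y$ is $\mathcal W$-open), and observe that any accumulation point $x$ of $(e_n)$ in $X$ lies in $F(e)\subset\alpha(e,p)$ by Lemma \ref{fe}; radial convexity then gives $\varphi(e)=d(e,p)\ge d(x,p)\ge t$. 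Second, $e\mapsto\langle e,\varphi(e)\rangle$ is a homeomorphism of $Y$ onto $\gr(\varphi)\subset Z\times[0,\infty)$: continuity is clear since $\mathcal W$ is coarser than the topology of $Y$ and $d$ is continuous, and for continuity of the inverse one repeats the accumulation-point argument, using that $x\in\alpha(e,p)$ with $d(x,p)=d(e,p)$ forces $x=e$ by radial convexity. Without these two verifications (or something equivalent), your sketch does not yield the theorem, and the alternative $\pi$ you propose would still require exactly this kind of argument while introducing well-definedness and injectivity issues the identity map avoids.
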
 

\begin{proof}Suppose that $E(X)$ is hereditarily disconnected, and let  $Y=E(X)\setminus\{p\}$. The collection of all clopen subsets of $Y$ is a basis for a  zero-dimensional topology $\mathcal W$ on $Y$.  Put  $Z=(Y,\mathcal W)$. Let $d$ be a radially convex metric on $X$ (with respect to $p$), and define $\varphi:Z\to [0,\infty)$ by $\varphi(e)=d(e,p).$

\smallskip

\textit{Claim 1: $\varphi$ is USC.} 

\smallskip

Let $t>0$. We show that $\varphi^{-1}[t,\infty)$ is closed in $Z$. To that end, let $e$ be any point in the $Z$-closure of $\varphi^{-1}[t,\infty)$. By separability of $X$ there is a countable sequence   $U_1(e)\supset U_2(e)\supset \ldots$  of sets in $ \mathcal U_e$ such that $$F(e)=\bigcap_{n=1}^\infty \overline{U_n(e)}.$$ For each $n$ there exists $e_n\in U_n(e)$ such that $\varphi(e_n)\geq t$. Let $x$ be any accumulation point of $(e_n)$ in $X$. Then $x\in F(e)$. By Lemma \ref{fe}, $x\in \alpha(e,p)$. By continuity of $d$ we have $d(x,p)\geq t$. Since $d$ is radially convex, this implies $\varphi(e)=d(e,p)\geq t$.  Therefore  $e\in \varphi^{-1}[t,\infty)$. 

\smallskip

\textit{Claim 2: $Y$ is homeomorphic to the graph of $\varphi$.}  

\smallskip
Consider the graph $\gr(\varphi)=\{\langle e,\varphi(e)\rangle:e\in Z\}$ as a subspace of $Z\times [0,\infty)$.  Apparently, $e\mapsto \langle e,\varphi(e)\rangle$ defines a continuous one-to-one mapping of $Y$ onto $\gr(\varphi)$. We will prove that its inverse is continuous by letting $A$ be a closed subset of $Y$ and showing that $\gr(\varphi\restriction A)=\{\langle e,\varphi(e)\rangle:e\in A\}$ is closed in $\gr(\varphi)$. To that end, suppose that $\langle e,\varphi(e)\rangle\in \gr(\varphi)$ is an accumulation point of  $\gr(\varphi\restriction A)$.   Let $U_1(e)\supset U_2(e)\supset \ldots$ be as above. For each $n$ there exists $\langle e_n,\varphi(e_n)\rangle\in (U_n(e)\cap A)\times (\varphi(e)-\sfrac{1}{n},\varphi(e)+\sfrac{1}{n}).$   Let $x$ be any  accumulation point of $(e_n)$ in $X$. Then $x\in \alpha(e,p)$ by Lemma \ref{fe}, and $\varphi(x)=\varphi(e)$. By radial convexity of $d$ we have $x=e$. So $e$ is an accumulation point of $A$ in the topology of $Y$. Since $A$ is closed,   $e\in A$. Therefore $\langle e,\varphi(e)\rangle\in \gr(\varphi\restriction A)$ as desired. 

\smallskip

By the preceding  claims and Proposition \ref{usc}, $Y$ is almost zero-dimensional. This completes the proof of Theorem 16. \end{proof}

\begin{ut}\label{main}Let $X$ be a Suslinian smooth plane dendroid. If  $E(X)$ is hereditarily disconnected, then $E(X)$ is zero-dimensional. \end{ut}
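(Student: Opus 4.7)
The plan is to combine Theorem \ref{az} and Proposition \ref{sus}, and then upgrade the resulting zero-dimensionality of $E(X)\setminus\{p\}$ to zero-dimensionality of the full endpoint set $E(X)$ by a countable-sum argument.

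First, since $X$ is a smooth plane dendroid with $E(X)$ hereditarily disconnected, Theorem \ref{az} gives that $E(X)\setminus\{p\}$ is almost zero-dimensional. Next, since $X$ is Suslinian and $E(X)\setminus\{p\}\subset X$ is almost zero-dimensional, Proposition \ref{sus} applies and yields that $E(X)\setminus\{p\}$ is zero-dimensional.

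It remains to see that adjoining the initial point $p$ preserves zero-dimensionality. Fix a metric $d$ on $X$ and, for each $n\in\mathbb{N}$, set
\[
F_n=\{e\in E(X):d(e,p)\geq 1/n\}.
\]
Each $F_n$ is closed in $E(X)$ and is contained in $E(X)\setminus\{p\}$, hence is zero-dimensional as a subspace of the zero-dimensional space $E(X)\setminus\{p\}$. Writing
\[
E(X)=\{p\}\cup\bigcup_{n=1}^{\infty}F_n,
\]
we have expressed the separable metric space $E(X)$ as a countable union of closed zero-dimensional subsets, so the countable closed sum theorem for covering dimension in separable metric spaces forces $\dim E(X)\leq 0$, i.e.\ $E(X)$ is zero-dimensional.

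The only step with any real content is the first, which was already carried out in Theorem \ref{az} and Proposition \ref{sus}; the remaining work is essentially bookkeeping. The only minor point to check is that $p$ cannot create trouble after the sum theorem is invoked, but since $\{p\}$ is closed and zero-dimensional on its own, it is harmless in the decomposition. Thus the proof is a clean two-line synthesis of the results of \S 5.1 together with Theorem \ref{az}.
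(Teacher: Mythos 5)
Your proposal is correct and follows essentially the same route as the paper: Theorem \ref{az} gives almost zero-dimensionality of $E(X)\setminus\{p\}$, Proposition \ref{sus} upgrades this to zero-dimensionality, and then the point $p$ is adjoined. The only difference is that where you spell out the countable closed sum argument with the sets $F_n$, the paper simply cites \cite[Corollary 1.3.5]{engd} for the same final step.
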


\begin{proof}Suppose that $E(X)$ is hereditarily disconnected. Let $p$ be an initial point of $X$. By Theorem \ref{az}, $E(X)\setminus \{p\}$ is almost zero-dimensional. By Proposition \ref{sus}, $E(X)\setminus \{p\}$ is zero-dimensional. By \cite[Corollary 1.3.5]{engd},  $E(X)$ is zero-dimensional.\end{proof}

\begin{uc}\label{hjkk}Let $X$ be a smooth plane dendroid. If  $E(X)$ is hereditarily disconnected and $1$-dimensional, then $X$ contains a Cantor set of arcs.\end{uc}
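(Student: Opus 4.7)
The plan is very short: this corollary is an immediate consequence of Theorem \ref{main} (contrapositively) combined with Proposition 2.

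First I would observe that a $1$-dimensional space is not zero-dimensional. So if $E(X)$ is hereditarily disconnected and $1$-dimensional, then $E(X)$ fails to be zero-dimensional. Applying the contrapositive of Theorem \ref{main}, this forces $X$ to be non-Suslinian (a Suslinian smooth plane dendroid with hereditarily disconnected endpoints would have zero-dimensional endpoint set).

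Then, since $X$ is a non-Suslinian plane dendroid, Proposition 2 immediately yields a Cantor set of arcs in $X$, completing the proof.

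There is no real obstacle here: all the work has already been done in proving Theorem \ref{main} (which packaged together Theorem \ref{az}, Proposition \ref{sus}, and the countable-sum theorem) and in Proposition 2 (which invoked the weak Suslinian decomposition and Moore's triod theorem). The corollary is just the direct combination of these two results.
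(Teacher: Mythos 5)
Your proposal is correct and is exactly the route the paper intends: the contrapositive of Theorem \ref{main} shows $X$ is non-Suslinian (since a $1$-dimensional set cannot be zero-dimensional), and Proposition 2 then produces the Cantor set of arcs. Nothing is missing.
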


\subsection{Main example}

We construct a Suslinian smooth dendroid $D$ as a quotient of the Lelek fan $L$, such that $E(D)$ is homeomorphic to $E(L)$.  The example shows that Theorem 17 and Corollary 18 are false outside the plane. 

Let's begin by understanding what a Suslinian quotient of the Cantor fan might involve.  Let $C$ be the middle-thirds Cantor set. For each $n=0,1,2,\ldots$ let $\mathscr C^n$ be the natural partition of $C$ into $2^n$ disjoint closed sets of diameter $3^{-n}$. In $C\times [0,1]$, put $\langle c,0\rangle\sim \langle d,0\rangle$ for all $c,d\in C$. For each $n\geq 1$ define $\langle c,t\rangle\sim \langle d,t\rangle$ if $c$ and $d$ are in the same member of $\mathscr C^n$ and $$t\in [1-2^{1-n},1-2^{-n}].$$ The equivalence classes under the relation $\sim$ form an upper semi-continuous decomposition of the Cantor fan, and the quotient space is the Gehman dendrite (see Figure 3). Notice that the endpoints  are untouched by the identification of arcs; the endpoint set of the Gehman dendrite is  the  Cantor set.

\begin{figure}
\begin{center}\includegraphics[scale=0.28]{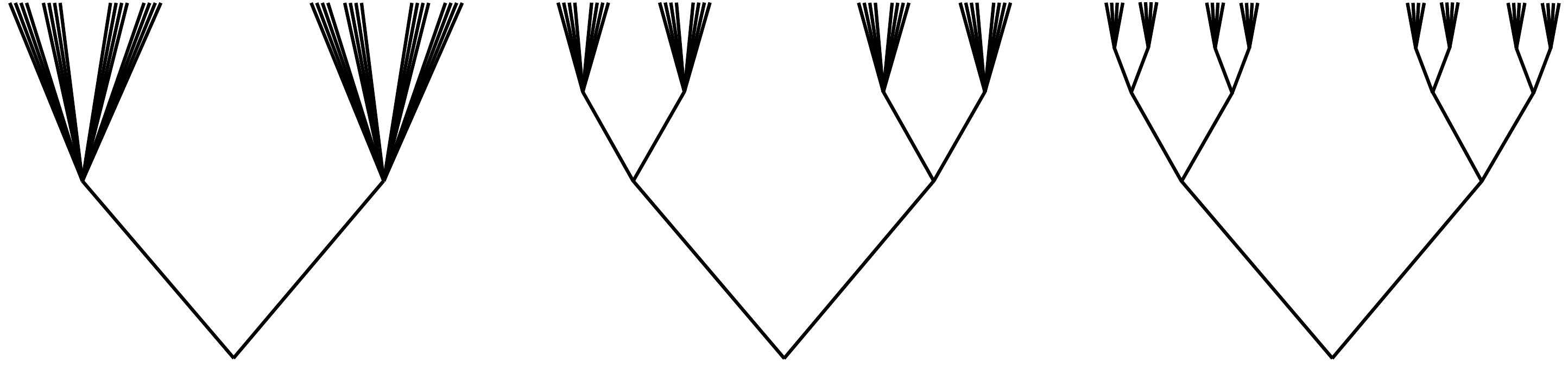}\end{center}
\caption{A quotient of the Cantor fan.}
 \end{figure}

In  order to apply a similar construction to the Lelek fan, we will require a few more definitions. If $\mathscr C$ and $\mathscr  D$ are partitions of a set $X$, then $\mathscr D$ \textbf{refines} $\mathscr C$ if for every $D\in \mathscr D$ there exists  $C\in \mathscr C$ such that $D\subset C$. If $\mathscr A$ is a collection of pairwise disjoint closed sets, then   $\mathscr C$ \textbf{respects} $\mathscr A$ if for every $C\in \mathscr C$ and $A\in \mathscr A$, either $C\subset A$ or $C\cap A=\varnothing$. A \textbf{null partition} of $X$ is a countable partition of $X$ whose elements form a null sequence (i.e.\ their diameters converge to $0$).

 \begin{ul}Let $\mathscr C=\{C_n:n<\omega\}$ be a closed partition of the Cantor set $C$, and suppose that $A_n$ is a closed subset of $C_n$ for each $n<\omega$. Then for any $\varepsilon>0$ there exists a null partition of $C$ of mesh $<\varepsilon$,  which refines  $\mathscr C$ and respects $\mathscr A=\{A_n:n<\omega\}$.\end{ul}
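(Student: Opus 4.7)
The plan is to construct, for each $n<\omega$, a null partition $\mathscr D_n$ of $C_n$ of mesh less than $\delta_n:=\min\{\varepsilon,1/(n+1)\}$ such that every member of $\mathscr D_n$ is either contained in $A_n$ or disjoint from $A_n$; then I set $\mathscr D:=\bigsqcup_{n<\omega}\mathscr D_n$. This $\mathscr D$ refines $\mathscr C$ by construction, and it respects $\mathscr A$ because any piece in $\mathscr D_n$ lies inside $C_n$ and so is automatically disjoint from every $A_m$ with $m\neq n$. For the null property, the bound $\diam<1/(n+1)$ on $\mathscr D_n$ means that for any threshold $\delta>0$ only the finitely many indices $n$ with $1/(n+1)\geq\delta$ can contribute pieces of diameter $\geq\delta$, and within each such $\mathscr D_n$ the sub-collection of pieces of diameter $\geq\delta$ will be finite by construction.

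To build $\mathscr D_n$ I exploit that $C_n$, being closed in the Cantor set, is itself compact and zero-dimensional. For each $x\in A_n$ I pick a clopen-in-$C_n$ neighborhood of diameter $<\delta_n$; compactness of $A_n$ extracts a finite subcover, which I disjointify to produce pairwise-disjoint clopen-in-$C_n$ sets $V_1,\ldots,V_k$ of diameter $<\delta_n$ whose union covers $A_n$. The remainder $R:=C_n\setminus(V_1\cup\cdots\cup V_k)$ is clopen in $C_n$, disjoint from $A_n$, and compact, so it admits a finite clopen partition of mesh $<\delta_n$. Inside each $V_i$, I take $V_i\cap A_n$ as a single closed piece (contained in $A_n$, diameter $<\delta_n$) and partition the relatively open remainder $V_i\setminus A_n$ into a null sequence of clopen-in-$V_i$ sets of mesh $<\delta_n$. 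Collecting all of these pieces gives $\mathscr D_n$; every member is closed in $C$ because $C_n$ is.

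The one slightly delicate ingredient is the null clopen partition of the open set $V_i\setminus A_n$ inside the compact zero-dimensional space $V_i$. I would handle it by a standard refinement argument: fix a sequence of finite clopen partitions $\mathcal P_1,\mathcal P_2,\ldots$ of $V_i$ with $\mathcal P_{k+1}$ refining $\mathcal P_k$ and $\mathrm{mesh}(\mathcal P_k)<1/k$, and place $P\in\mathcal P_k$ into the partition exactly when $P\subseteq V_i\setminus A_n$ but the unique $\mathcal P_{k-1}$-parent of $P$ is not contained in $V_i\setminus A_n$. Openness of $V_i\setminus A_n$ together with $\mathrm{mesh}(\mathcal P_k)\to 0$ ensures that every point is covered, disjointness follows from the refinement structure, and diameters tend to $0$ because pieces chosen at level $k$ have diameter $<1/k$. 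This is the only step that requires any real care; the rest is bookkeeping in a compact zero-dimensional setting.
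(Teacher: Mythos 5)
Your proof is correct, but it is organized differently from the paper's. The paper first uses the fact that each $A_n$, being closed in the Cantor set, is an intersection of clopen sets, so that $C_n\setminus A_n$ splits into countably many pairwise disjoint closed sets $B^n_i$; this gives a countable closed partition $\{A_n\}\cup\{B^n_i\}$ of $C$ that already refines $\mathscr C$ and respects $\mathscr A$, and then it enumerates these pieces as $D_0,D_1,\dots$ and cuts the $k$-th one into finitely many closed sets of diameter $<\varepsilon/k$, so smallness of mesh and nullness both come from one decaying bound along the enumeration. You instead work inside each $C_n$ separately: you cover $A_n$ by finitely many small clopen sets $V_i$, keep each $V_i\cap A_n$ as a single piece, decompose the open complements $V_i\setminus A_n$ (and the leftover clopen set $R$) by a Whitney-type selection from nested finite clopen partitions, and obtain nullness of the global family from the per-$n$ mesh bound $1/(n+1)$ together with nullness of each $\mathscr D_n$. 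Both routes are elementary exploitations of compactness and zero-dimensionality; the paper's is shorter because the clopen-intersection fact does the work of your nested-partition selection, while yours is more self-contained and makes the decomposition explicit, at the cost of extra bookkeeping --- note two small points you should police: after disjointifying the $V_i$ some $V_i\cap A_n$ may be empty (discard such pieces, and if $V_i$ misses $A_n$ entirely just take $V_i$ itself as one piece, since your level-$1$ selection rule with the convention $\mathcal P_0=\{V_i\}$ would otherwise select nothing there), and the pieces you select at level $1$ need that convention to make ``parent'' meaningful. Splitting $A_n$ into the finitely many pieces $V_i\cap A_n$ is harmless, since respecting $\mathscr A$ only requires each piece to be contained in or disjoint from each $A_m$.
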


\begin{proof}Each $A_n$ is an intersection of clopen sets in $C$, so $C_n\setminus A_n$ can be written as a countable union of pairwise disjoint closed sets $B^n_0,B^n_1,\ldots$ and so forth.  We now have that $\{A_n:n<\omega\}\cup \{B^n_i:n,i<\omega\}$ is a countable partition of $C$ which can be enumerated $D_0,D_1,\ldots$ and so on. Each $D_k$ is partitioned by a finite collection $\mathscr D_k$ of pairwise disjoint closed sets of diameter $<\varepsilon/k$. The desired partition of $C$ is formed by the members of all $\mathscr D_k$'s. \end{proof}

 \begin{ue}There exists a Suslinian smooth dendroid $D$ such that $E(D)$ is homeomorphic to $E(L)$.\end{ue}

\begin{proof}  Given a function $\varphi:C\to [0,1]$ with Cantor set domain, we define $$L^\varphi_0=\bigcup_{c\in C} \{c\}\times [0,\varphi(c)].$$ We consider the Lelek fan $L$ as the quotient of some $L^\varphi_0$ that is obtained by shrinking $C\times \{0\}$ to a single point (the vertex of the fan). Thus $E(L)=\{\langle c,\varphi(c)\rangle:\varphi(c)>0\}$.

 In $L^\varphi_0$, put $\langle c,0\rangle\sim \langle d,0\rangle$ for all $c,d\in C$.  Let $A_1=\{c\in C:\varphi(c)\geq \frac{3}{4}\}$. Let $\mathscr C^1$ be a null partition of $C$ that respects $A_1$.  For each $c\in C$ let
$$ \varphi_1(c)=\begin{cases}
			\frac{1}{2} & \text{if $c\in A_1$}\\
            0 & \text{otherwise.}
		 \end{cases}$$
		   If $c$ and $d$ belong to the same member of  $\mathscr C^1$, then put $\langle c,t\rangle\sim \langle d,t\rangle$ for each $t\in [0,\varphi_1(c)]$. We continue this procedure as  follows. 
		   
		   Suppose $n\geq 2$, and $\mathscr C^{n-1}=\{C^{n-1}_k:k<\omega\}$ is a partition of  $C$. Let $$A^{n}_k=\big\{c\in C^{n-1}_k:\varphi(c)\geq \varphi_{n-1}(c)+\textstyle \frac{3}{4n}\big\}.$$  Put $$
\varphi_n(c)=\begin{cases}
			\varphi_{n-1}(c)+\frac{1}{2n} & \text{if $c\in \bigcup_{k=0}^\infty A^{n}_k$}\\
           \varphi_{n-1}(c) & \text{otherwise.}
		 \end{cases}
$$
Let $\mathscr C^n$ be a null partition of mesh $<\frac{1}{n}$ that refines $\mathscr C^{n-1}$ and respects $\mathscr A^n$.  If $c$ and $d$ belong to the same member  of $\mathscr C^n$, put $\langle c,t\rangle\sim \langle d,t\rangle$ for each $t\in [\varphi_{n-1}(c),\varphi_{n}(c)]$.

It is easily checked that the equivalence classes under $\sim$ form an upper semi-continuous decomposition of $L$ (note that a sequence of equivalence classes must converge to an entire  equivalence class, or to a single point of $L$). Moreover  $D$ is a Suslinian smooth dendroid with endpoint set    $E(L)$. \end{proof}

\begin{ur}The dendroid $D$ not planable by Theorem \ref{main}. In fact, since $E(L)$ is almost zero-dimensional and $1$-dimensional, by Proposition \ref{sus} every plane continuum that homeomorphically contains $E(L)$ is non-Suslinian.
\end{ur}

\begin{ur}A space is called \textbf{rational} if it has a basis of open sets with countable boundaries. It is known that every rational continuum is Suslinian, and that the converse is false.   The  dendroid $D$ is an example of a Suslinian but not rational continuum, owing to the fact that $E(L)$ is not rational  \cite[Corollary 4.8]{coh}. Lelek  constructed a Suslinian smooth \textit{plane}  dendroid  which is not rational  \cite[Example 3.1]{lel1}.  \end{ur}

\section{Bellamy dendroids}

Motivated by \cite{bel}, we  define a \textbf{Bellamy dendroid} to be a smooth plane dendroid with connected endpoint set. The following theorem allows us to restate the results of \S5.2  in terms of  Bellamy dendroids. 

\begin{ut}\label{hdbel}Let $X$ be a smooth plane dendroid. Then $E(X)$ is hereditarily disconnected if and only if $X$ does not contain a Bellamy dendroid.\end{ut}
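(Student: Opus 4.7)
The plan is to prove each direction of the equivalence separately. For the contrapositive of the backward implication --- if $E(X)$ is not hereditarily disconnected then $X$ contains a Bellamy dendroid --- I will construct an explicit Bellamy subdendroid. Starting from a non-degenerate connected subset $K \subseteq E(X)$ and an initial point $p$ of $X$, let $q \in X$ be the unique point such that $\alpha(p,q) = \bigcap_{k \in \overline{K}} \alpha(p,k)$, the common initial segment of the arcs from $p$ to the closure of $K$. Set
\[
Y := \bigcup_{k \in \overline{K}} \alpha(q,k).
\]
By smoothness of $X$ at $p$, the map $k \mapsto \alpha(q,k)$ is continuous on $\overline{K}$, so $Y$ is closed as a union of a continuously varying family of arcs indexed by the compact set $\overline{K}$. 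Since $Y$ is arcwise connected through $q$, hereditarily unicoherent (as a subcontinuum of $X$), and planar, it is a subdendroid of $X$ smooth at $q$. To verify that $Y$ is Bellamy I will show the sandwich $K \subseteq E(Y) \subseteq \overline{K}$: each $k \in K \subseteq E(X)$ is maximal in the $p$-order of $X$ and hence in $Y$; any $y \in Y \setminus \overline{K}$ is interior to some arc $\alpha(q,k)$ and so has order at least $2$ in $Y$; and $q$ itself lies outside $E(Y)$ because, by the choice of $q$, at least two arcs $\alpha(q,k_1), \alpha(q,k_2)$ diverge immediately at $q$. The standard fact that any set between a connected set and its closure is connected then yields the connectedness of $E(Y)$.

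For the contrapositive of the forward implication --- if $X$ contains a Bellamy dendroid $Y$ then $E(X)$ is not hereditarily disconnected --- the delicate point is that $E(Y) \subseteq E(X)$ is not automatic: at each $e \in E(Y) \setminus E(X)$ an arc of $X$ protrudes past $e$ into $X \setminus Y$. I plan to split into cases. If $E(Y) \cap E(X)$ contains a non-degenerate connected subset, we are done directly. Otherwise, for each $e \in E(Y)$ I pick an endpoint $e^{*} \in E(X)$ on a maximal arc of $X$ extending $\alpha_Y(v,e)$, where $v$ is an initial point of $Y$, and set $e^{*} = e$ when $e \in E(X)$. The resulting map $\phi\colon E(Y) \to E(X)$ is injective, since $\phi(e_1) = \phi(e_2)$ would force one of the arcs $\alpha(v,e_i)$ to be properly contained in the other, making $e_1$ or $e_2$ an interior point of an arc of $Y$, contradicting $e_i \in E(Y)$. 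I will then argue, using the planar cyclic order at each ramification point together with Theorem~\ref{circ}, that $\phi(E(Y))$ must contain a non-degenerate connected subset of $E(X)$.

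The main obstacle is the final step of the forward direction: the map $\phi$ is typically discontinuous, jumping at each $e \in E(Y) \setminus E(X)$ from $e$ to an endpoint $e^{*}$ on a distinct branch of $X$. To preserve a non-degenerate connected piece of $\phi(E(Y))$ despite these jumps I expect to exploit planarity through Theorem~\ref{circ}: if $\phi(E(Y))$ had no non-degenerate connected subset, one could separate arbitrary pairs of its points by simple closed curves meeting $E(X)$ in at most two points, eventually leading --- via a quasicomponent argument in the spirit of Lemma~\ref{po} and Theorem~\ref{hdtd} --- to a contradiction with the connectedness of $E(Y)$.
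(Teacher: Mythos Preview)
For the direction the paper actually proves (if $E(X)$ is not hereditarily disconnected then $X$ contains a Bellamy dendroid), your argument is correct but considerably more elaborate than needed. The paper simply takes a non-degenerate connected $C\subseteq E(X)$ and lets $K=\overline{C}$. By \cite[Corollary~6]{on} the closure of a connected subset of a smooth dendroid is again a smooth dendroid, so $K$ is a smooth plane dendroid; since every point of $C\subseteq E(X)$ is automatically an endpoint of the subcontinuum $K$, and $C$ is dense in $K$ (hence in $E(K)$), one gets $C\subseteq E(K)\subseteq\overline{C}$ and therefore $E(K)$ is connected. There is no need to locate the branch point $q$, form the explicit union $Y=\bigcup_{k\in\overline{K}}\alpha(q,k)$, or verify smoothness and the sandwich inclusion by hand---the cited result does all of this at once. (Your version also leaves a loose end: that ``at least two arcs $\alpha(q,k_1),\alpha(q,k_2)$ diverge immediately at $q$'' does not follow directly from $q$ being merely the endpoint of an infinite intersection of arcs.)

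For the other direction your proposal has a genuine gap. The paper disposes of this direction as ``trivial''. You argue instead that $E(Y)\subseteq E(X)$ may fail and set up the extension map $\phi:E(Y)\to E(X)$, but from that point on you only \emph{announce} an argument: you say you will ``argue, using the planar cyclic order \ldots\ together with Theorem~\ref{circ}'' and that you ``expect to exploit planarity \ldots\ via a quasicomponent argument in the spirit of Lemma~\ref{po} and Theorem~\ref{hdtd}''. None of this is carried out. Since $\phi$ is, as you yourself note, typically discontinuous, connectedness of $E(Y)$ does not pass to $\phi(E(Y))$ in any obvious way, and nothing you have written shows how Theorem~\ref{circ} would force a non-degenerate connected subset inside $\phi(E(Y))$. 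As it stands, this half of your proposal is a plan, not a proof.
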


\begin{proof}Suppose that $E(X)$ is not hereditarily disconnected, and let $C$ be a non-degenerate connected subset of $E(X)$. Then $K=\overline{C}$ is a smooth plane dendroid \cite[Corollary 6]{on}, and  $C$ is a dense connected subset of $E(K)$. Therefore $E(K)$ is connected and $K$ is a Bellamy dendroid. 

The other direction is trivial.\end{proof}

\begin{ut}If $X$ is a  smooth plane dendroid, then:
\begin{itemize} 
\item[(a)] $E(X)\setminus \{p\}$ is almost zero-dimensional, or
\item[(b)] $X$ contains a Bellamy dendroid.
\end{itemize}\end{ut}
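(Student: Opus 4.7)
The statement is essentially a packaging of two results already established in the paper, so my plan is to deduce it as a direct dichotomy. I would let $p$ be an initial point of $X$ and split into cases according to whether $E(X)$ is hereditarily disconnected.

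In the first case, suppose $E(X)$ is hereditarily disconnected. Then Theorem \ref{az} applies verbatim and yields that $E(X)\setminus\{p\}$ is almost zero-dimensional, giving conclusion (a). In the second case, suppose $E(X)$ is not hereditarily disconnected, meaning $E(X)$ contains a non-degenerate connected subset. Then Theorem \ref{hdbel} tells us that $X$ contains a Bellamy dendroid, giving conclusion (b). Since every smooth plane dendroid falls into one of these two exhaustive cases, the dichotomy is established.

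There is no serious obstacle here: the result is a restatement of the combined content of Theorems \ref{az} and \ref{hdbel}, phrased as an unconditional alternative rather than as an implication whose hypothesis refers to hereditary disconnectedness. The only thing to be mindful of is that the dichotomy on $E(X)$ (hereditarily disconnected vs.\ containing a non-degenerate connected set) is logically exhaustive by definition, so no extra argument is required to cover a middle ground.
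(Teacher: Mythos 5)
Your proposal is correct and matches the paper's argument: both deduce the dichotomy directly from Theorems \ref{az} and \ref{hdbel}, the paper simply phrasing it contrapositively (if $X$ contains no Bellamy dendroid, then $E(X)$ is hereditarily disconnected, so $E(X)\setminus\{p\}$ is almost zero-dimensional) rather than as your explicit case split. No gap; nothing further is needed.
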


\begin{proof}If $X$ does not contain a Bellamy dendroid, then $E(X)\setminus \{p\}$ is almost zero-dimensional by   Theorems \ref{az} and \ref{hdbel}.  \end{proof}

\begin{ut}\label{hjk}If $X$ is a Suslinian smooth plane dendroid, then:
\begin{itemize} 
\item[(a)] $E(X)$ is zero-dimensional, or
\item[(b)] $X$ contains a Bellamy dendroid.
\end{itemize}\end{ut}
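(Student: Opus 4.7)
The plan is to derive this as a direct combination of the two preceding theorems, namely Theorem \ref{main} and Theorem \ref{hdbel}. The dichotomy is a matter of contraposition: assume conclusion (b) fails and deduce (a).

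First I would suppose that $X$ does not contain a Bellamy dendroid. By Theorem \ref{hdbel}, this is equivalent to $E(X)$ being hereditarily disconnected. Since $X$ is assumed to be a Suslinian smooth plane dendroid, Theorem \ref{main} applies and gives that $E(X)$ is zero-dimensional. Hence alternative (a) holds, completing the proof.

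There is no real obstacle here because the substantive work has already been done: the almost zero-dimensionality of $E(X)\setminus\{p\}$ in the hereditarily disconnected case (Theorem \ref{az}), the passage from almost zero-dimensional to zero-dimensional under the Suslinian hypothesis (Proposition \ref{sus}), the re-insertion of the initial point $p$ via \cite[Corollary 1.3.5]{engd}, and the equivalence between hereditary disconnectedness of $E(X)$ and the absence of a Bellamy subdendroid (Theorem \ref{hdbel}) were all established previously. The present theorem is a repackaging of Theorem \ref{main} in the Bellamy-dendroid language introduced in this section, so the proof is essentially a two-line invocation of those results.
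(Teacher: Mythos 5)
Your proposal is correct and matches the paper's own proof, which likewise cites Theorems \ref{main} and \ref{hdbel} and nothing more: assuming no Bellamy subdendroid, Theorem \ref{hdbel} gives hereditary disconnectedness of $E(X)$, and Theorem \ref{main} then yields zero-dimensionality. Nothing further is needed.
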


\begin{proof}Theorems \ref{main} and \ref{hdbel}.   \end{proof}

\begin{uc}Let $X$ be a smooth plane dendroid. If $E(X)$ is 1-dimensional, then $X$ contains a Bellamy dendroid or  a Cantor set of arcs.
\end{uc}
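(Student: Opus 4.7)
The plan is to argue by dichotomy, splitting on whether $E(X)$ is hereditarily disconnected, and to combine results already established in the paper so that essentially no new work is required.

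First, suppose $E(X)$ is not hereditarily disconnected. Then by Theorem \ref{hdbel}, $X$ contains a Bellamy dendroid, and we are done in this branch. So the remaining case is that $E(X)$ is hereditarily disconnected. Under our standing assumption that $E(X)$ is $1$-dimensional, Corollary \ref{hjkk} applies directly and gives a Cantor set of arcs inside $X$, again finishing the proof.

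The whole argument is therefore a two-line deduction: Theorem \ref{hdbel} lets us convert the hypothesis ``$X$ does not contain a Bellamy dendroid'' into ``$E(X)$ is hereditarily disconnected,'' and Corollary \ref{hjkk} then produces the Cantor set of arcs from the combination of hereditary disconnectedness and $1$-dimensionality of $E(X)$. There is no technical obstacle here; the real work was done earlier, in particular in Theorem \ref{az} (almost zero-dimensionality of $E(X)\setminus\{p\}$), Proposition \ref{sus} (using Suslinian plane continua to upgrade almost zero-dimensional to zero-dimensional), and Proposition 2 (extracting a Cantor set of arcs from any non-Suslinian plane dendroid). The only thing to be careful about in writing this up is the logical form: we want to phrase it as ``either $X$ contains a Bellamy dendroid, or $E(X)$ is hereditarily disconnected, in which case Corollary \ref{hjkk} applies,'' rather than trying to apply Corollary \ref{hjkk} unconditionally.
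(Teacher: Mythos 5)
Your proposal is correct and matches the paper's own argument exactly: the paper proves this corollary by citing Corollary \ref{hjkk} together with Theorem \ref{hdbel}, which is precisely your dichotomy on whether $E(X)$ is hereditarily disconnected. Nothing further is needed.
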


\begin{proof}Corollary \ref{hjkk} and Theorem \ref{hdbel}.\end{proof}

\section{Questions}We do not know if all Bellamy dendroids are non-Suslinian.

\begin{uq}Is there a Suslinian Bellamy dendroid? \end{uq}

\begin{uq}Let $X$ be a smooth plane dendroid. If $X$ is Suslinian, then is some point of $X\setminus E(X)$ accessible from $\mathbb R^2\setminus X$? \end{uq}

A positive answer to Question 2 may lead to a negative answer to Question 1.  Recall  that  Bellamy's dendroid in \cite{bel} has no accessible points other than endpoints.

\end{document}